\theoremstyle{plain}
\newtheorem{theorem}[equation]{Theorem}
\newtheorem{lemma}[equation]{Lemma}
\newtheorem{corollary}[equation]{Corollary}
\theoremstyle{definition}
\newtheorem{definition}[equation]{Definition}
\theoremstyle{remark}
\newtheorem{remark}[equation]{Remark}
\newcommand{\dv}{\operatorname{div}}
\newcommand{\tr}{\operatorname{tr}}
\numberwithin{equation}{section}
\newcommand{\bR}{\mathbb{R}}
\newcommand\cL{\mathcal{L}}
\providecommand{\set}[1]{\{#1\}}
\providecommand{\abs}[1]{\lvert#1\rvert}
\providecommand{\Abs}[1]{\left\lvert#1\right\rvert}
\providecommand{\norm}[1]{\lVert#1\rVert}
\renewcommand{\vec}[1]{\boldsymbol{#1}}
\begin{document}
\title[Estimates for linear elliptic operators]
{On $C^1$, $C^2$, and weak type-$(1,1)$ estimates for linear elliptic operators}

\author[H. Dong]{Hongjie Dong}
\address[H. Dong]{Division of Applied Mathematics, Brown University,
182 George Street, Providence, RI 02912, United States of America}
\email{Hongjie\_Dong@brown.edu}
\thanks{H. Dong was partially supported by the NSF under agreement DMS-1056737 and DMS-1600593.}

\author[S. Kim]{Seick Kim}
\address[S. Kim]{Department of Mathematics, Yonsei University, 50 Yonsei-ro, Seodaemun-gu, Seoul 03722, Republic of Korea}
\email{kimseick@yonsei.ac.kr}
\thanks{S. Kim is partially supported by NRF Grant No. NRF-20151009350.}

\subjclass[2010]{Primary 35B45, 35B65 ; Secondary 35J47}

\keywords{}

\begin{abstract}
We show that any weak solution to elliptic equations in divergence form is continuously differentiable provided that the modulus of continuity of coefficients in the $L^1$-mean sense satisfies the Dini condition.
This in particular answers a question recently raised by Yanyan Li \cite{Y.Li2016} and allows us to improve a result of Brezis \cite{Br08}.
We also prove a weak type-$(1,1)$ estimate under a stronger assumption on the modulus of continuity.
The corresponding results for non-divergence form equations are also established.
\end{abstract}
\maketitle

\section{Introduction and main results}
Let $L$ be a second-order elliptic operator in divergence form
\begin{equation*}	
L u= \dv (A(x) \nabla u)=\sum_{i,j=1}^d D_i(a^{ij}(x) D_j u).
\end{equation*}
Here, we assume that the coefficients $A=(a^{ij})_{i,j=1}^d$ are defined on a domain $\Omega \subset \bR^d$ and satisfy the ellipticity condition
\[
\lambda \abs{\xi}^2 \le \textstyle \sum_{i,j=1}^d a^{ij}(x) \xi_i \xi_j,\quad \forall \xi=(\xi_1,\ldots, \xi_d)^\top \in \bR^d,\quad\forall x \in \Omega
\]
and boundedness condition
\[
\textstyle\sum_{i,j=1}^d \,\abs{a^{ij}(x)}^2 \le \Lambda^2,\quad \forall x \in \Omega
\]
for some constants $\lambda, \Lambda>0$.
It is well known that any weak solution to $Lu=\dv \vec g$ is continuously differentiable provided that $A$ and $\vec g$ satisfies the $\alpha$-increasing Dini continuity condition for some $\alpha\in (0,1]$; i.e., the moduli of continuity of $A$ and $\vec g$ in the $L^\infty$ sense is bounded by a continuous, increasing function $\varphi:[0,\infty) \to [0,\infty)$, which satisfies
$\varphi(0)=0$,
\begin{equation}			\label{eq-dini}
\int_0^1 \frac{\varphi(r)}{r}\,dr <+\infty,
\end{equation}
and $\varphi(s)/s^\alpha$ is a decreasing function.
See, for instance, \cite{ME65,Bu78,Lieb87,MM11}.

In a recent paper \cite{Y.Li2016}, Yanyan Li raised a question regarding $C^1$ regularity of weak solution of  $Lu= 0$ in $B_4=B(0,4)$ if the Dini condition~\eqref{eq-dini} holds for
\[
\varphi(r):=\sup_{x\in B_3} \left( \fint_{B(x,r)} \abs{A-\bar A_{B(x,r)}}^2 \right)^{\frac12},
\;\text{ where }\;
\bar A_{B(x,r)}=\fint_{B(x,r)} A:=\frac{1}{\abs{B(x,r)}} \int_{B(x,r)} A.
\]
We found that the answer is positive.
In fact, we can replace $L^2$ average in $\varphi(r)$ by $L^1$ average; i.e., we have $C^1$ regularity if we assume
\begin{equation}			\label{eq:dcmo}
\int_0^1 \frac{ \omega_A(r)}{r}\,dr <+\infty,\;\mbox{ where }\;
\omega_A(r):=\sup_{x\in B_3} \fint_{B(x,r)} \abs{A-\bar A_{B(x,r)}}.
\end{equation}

\begin{definition}
                    \label{def1.3}
For a function $g$ defined on $B_4=B(0,4)$, we shall say that $g$ has \emph{Dini mean oscillation} if it satisfies the following condition:
\begin{equation}					\label{13.24f}
\int_0^1 \frac{ \omega_g(r)}{r}\,dr <+\infty,\;\mbox{ where }\;
\omega_g(r):=\sup_{x\in B_3} \fint_{B(x,r)} \abs{g-\bar {g}_{B(x,r)}}.
\end{equation}
\end{definition}
The Dini mean oscillation condition is weaker than the usual Dini continuity condition mentioned above.
For example, if $a^{ij}(0)=\delta_{ij}$ and for $0<\abs{x} \ll 1$,
\[
a^{ij}(x)=\delta_{ij}\left(1+(-\ln \abs{x})^{-\gamma}\right),
\]
where $0<\gamma <\frac12$, then $A$ is neither Dini continuous nor satisfies the square Dini condition considered in \cite{MM11}.
However, a simple calculation reveals that
\[
\omega_A(r)\sim  (-\ln r)^{-\gamma-1},
\]
which implies that $A$ satisfies the Dini mean oscillation condition in Definition \ref{def1.3}.
We formulate our result more precisely as the following theorem.

\begin{theorem}					\label{thm-main-d}
Suppose the coefficients $A(x)$ of the divergent operator $L$ have Dini mean oscillation; i.e., $A$ satisfies \eqref{eq:dcmo}.
Let $u \in W^{1,2}(B_4)$ be a weak solution of
\[
Lu= \dv (A(x) \nabla u)=\dv \vec g\;\mbox{ in } \; B_4=B(0,4),
\]
where $\vec g$ has Dini mean oscillation; i.e., $\vec g$ satisfies \eqref{13.24f}.
Then, we have $u\in C^1(\overline{B_1})$.
\end{theorem}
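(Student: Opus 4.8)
The plan is to prove the stronger statement that $\nabla u$ has Dini mean oscillation on $\overline{B_1}$ together with a uniform sup bound, and then to conclude $u\in C^1(\overline{B_1})$ from the elementary Campanato-type observation that Dini mean oscillation forces continuity: since $\abs{\bar g_{B_{r/2}(x)}-\bar g_{B_r(x)}}\le 2^d\,\omega_g(r)$, condition \eqref{13.24f} makes the averages $\bar g_{B_r(x)}$ a Cauchy net as $r\to0$, uniformly for $x$ in a smaller ball, with limit a continuous representative whose modulus of continuity is controlled by $\int_0^r\omega_g(s)\,s^{-1}\,ds$. Applied to $g=\nabla u_\delta$ (below) this will yield equicontinuity, and then the theorem.

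Since the error terms below involve $\nabla u$ pointwise, I first smooth the coefficients: mollify $A\rightsquigarrow A_\delta$ and $\vec g\rightsquigarrow\vec g_\delta$ so that ellipticity is preserved and $\omega_{A_\delta}\le C\omega_A$, $\omega_{\vec g_\delta}\le C\omega_{\vec g}$, $\norm{\vec g_\delta}_{L^\infty}\le\norm{\vec g}_{L^\infty}$, and solve $\dv(A_\delta\nabla u_\delta)=\dv\vec g_\delta$ in $B_4$ with $u_\delta=u$ on $\partial B_4$; then $u_\delta\to u$ in $W^{1,2}(B_4)$ and, crucially, $\nabla u_\delta\in L^\infty_{\loc}(B_4)$ a priori. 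Every estimate must be uniform in $\delta$. The heart is a perturbation argument at each $x_0\in B_2$ and each scale $r\le r_0$ ($r_0$ small, to be fixed): let $v$ solve the frozen equation $\dv(\bar A_{\delta,B_r(x_0)}\nabla v)=0$ in $B_r(x_0)$ with $v=u_\delta$ on $\partial B_r(x_0)$, so that $w:=u_\delta-v$ satisfies $\dv(\bar A_{\delta,B_r(x_0)}\nabla w)=\dv\vec f$ with $w=0$ on $\partial B_r(x_0)$, where $\vec f=(\bar A_{\delta,B_r(x_0)}-A_\delta)\nabla u_\delta+(\vec g_\delta-\bar{\vec g}_{\delta,B_r(x_0)})$. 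Fix $q\in(0,1)$. For $v$ I use interior estimates for constant-coefficient equations (a linear change of variables reduces to harmonic functions, whose gradient is smooth with the natural scaling), giving $\inf_{\vec q}\bigl(\fint_{B_\rho(x_0)}\abs{\nabla v-\vec q}^{q}\bigr)^{1/q}\le C(\rho/r)\inf_{\vec q}\bigl(\fint_{B_{r/2}(x_0)}\abs{\nabla v-\vec q}^{q}\bigr)^{1/q}$ for $\rho\le r/2$; for $w$ I use a weak type-$(1,1)$ estimate for the constant-coefficient operator $\dv(\bar A_{\delta,B_r(x_0)}\nabla\,\cdot\,)$, i.e. $\norm{\nabla w}_{L^{1,\infty}(B_r(x_0))}\le C\norm{\vec f}_{L^1(B_r(x_0))}$, which upgrades (integrating the distribution function) to $\bigl(\fint_{B_r(x_0)}\abs{\nabla w}^{q}\bigr)^{1/q}\le C\fint_{B_r(x_0)}\abs{\vec f}$. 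Combining these with the quasi-triangle inequality for the $L^q$ quasi-norm yields, with $\Phi_\delta(x_0,r):=\inf_{\vec q}\bigl(\fint_{B_r(x_0)}\abs{\nabla u_\delta-\vec q}^{q}\bigr)^{1/q}$ and a suitable small $\kappa$,
\[
\Phi_\delta(x_0,\kappa r)\le C_0\kappa\,\Phi_\delta(x_0,r)+C_1\kappa^{-d/q}\Bigl(\norm{\nabla u_\delta}_{L^\infty(B_r(x_0))}\,\omega_A(r)+\omega_{\vec g}(r)\Bigr).
\]

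It remains to close the iteration. First one bounds $\norm{\nabla u_\delta}_{L^\infty(\overline{B_2})}$ independently of $\delta$: controlling $\norm{\nabla u_\delta}_{L^\infty(B_r(x_0))}$ by $\Phi_\delta$ at a comparable scale (interior estimates again, up to a term carrying a small factor $r$), summing the displayed inequality over the scales $\kappa^k r_0$, and absorbing, one obtains such a bound provided $\int_0^{r_0}\omega_A(s)\,s^{-1}\,ds$ is sufficiently small — which fixes $r_0$; this is exactly where the a priori finiteness of $\norm{\nabla u_\delta}_{L^\infty}$, hence the smoothing, is used. With $\norm{\nabla u_\delta}_{L^\infty(\overline{B_2})}\le N$ in hand the bracket above is $\le C(N\omega_A(r)+\omega_{\vec g}(r))$, the recursion is linear, and choosing $\kappa$ with $C_0\kappa\le\tfrac12$ and unrolling gives $\Phi_\delta(x_0,r)\le\omega^*(r)$ for all $x_0\in B_1$, $r\le r_0$, and all $\delta$, where $\omega^*$ — arising as a convolution of a geometric sequence with the summable sequence $k\mapsto N\omega_A(\kappa^k r_0)+\omega_{\vec g}(\kappa^k r_0)$ — is again a Dini modulus. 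Thus $\{\nabla u_\delta\}$ has Dini mean oscillation on $\overline{B_1}$ uniformly in $\delta$; by the Campanato observation it is equicontinuous and (by the $L^\infty$ bound) uniformly bounded on $\overline{B_1}$, so a subsequence converges uniformly there, necessarily to $\nabla u$ since $u_\delta\to u$ in $W^{1,2}$, whence $\nabla u$ has a continuous representative on $\overline{B_1}$, i.e. $u\in C^1(\overline{B_1})$. The main difficulty is this closing step: the freezing error $\fint\abs{\vec f}$ must be estimated by the \emph{first} powers of $\omega_A$ and $\omega_{\vec g}$ — the cheaper $L^2$ energy estimate for $w$ would only yield $\omega_A^{1/2}$, $\omega_{\vec g}^{1/2}$ and hence require the stronger square-Dini hypothesis — which is why the weak type-$(1,1)$ estimate for $w$ (at the cost of passing from $L^1$ to the $L^q$ quasi-norm, $q<1$) is indispensable; and the a priori $L^\infty$ control of $\nabla u_\delta$ needed to make $\fint\abs{\vec f}$ meaningful is what forces the preliminary smoothing together with a self-referential absorption that closes only because the Dini tail $\int_0^{r_0}\omega_A(s)\,s^{-1}\,ds$ can be made small.
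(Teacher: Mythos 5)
Your proposal matches the paper's argument essentially step for step: both use the frozen-coefficient decomposition $u=v+w$ with $w$ solving a zero-boundary problem for the constant-coefficient operator, a weak type-$(1,1)$ estimate for that operator, passage to the $L^q$ quasi-norm with $q<1$ by integrating the distribution function, a Campanato-type iteration in the radius together with a self-improving absorption (using the smallness of $\int_0^{r_0}\omega_A(s)\,s^{-1}\,ds$) to bound $\norm{\nabla u}_{L^\infty(B_2)}$, and a Dini modulus of continuity for $\nabla u$; your mollification of $A$ and $\vec g$ is precisely the ``standard approximation argument'' the paper invokes to reduce to an a priori estimate for $C^1$ solutions. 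One small imprecision worth noting: the iteration controls the $L^q$ Campanato semi-norm $\Phi_\delta(x_0,r)=\inf_{\vec q}\bigl(\fint_{B_r(x_0)}\abs{\nabla u_\delta-\vec q}^q\bigr)^{1/q}$, which (since $q<1$) does \emph{not} dominate the $L^1$ mean oscillation, so the final continuity should be read off from the Cauchy behavior of the infimizers $\vec q_{x_0,r}$ as $r\to0$ (exactly as the paper does, showing $\vec q_{x_0,\kappa^i r}\to\nabla u_\delta(x_0)$ with Dini-controlled rate uniformly in $x_0$), rather than literally from ``Dini mean oscillation of $\nabla u_\delta$.''
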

An upper bound of the modulus of continuity of $Du$ can be found in the proof in Section~\ref{sec2}.
We also consider an elliptic operator $\cL$ in non-divergence form
\begin{equation*}	
\cL u= \tr (A(x) D^2 u)=\sum_{i,j=1}^d a^{ij}(x) D_{ij} u,
\end{equation*}
where the coefficients $A(x)$ are assumed to be symmetric and satisfy the same ellipticity and boundedness condition as above.

\begin{theorem}					\label{thm-main-nd}
Suppose the coefficients $A(x)$ of the non-divergent operator $\cL$ have Dini mean oscillation; i.e., $A$ satisfies  \eqref{eq:dcmo}.
Let $u \in W^{2,2}(B_4)$ be a strong solution of
\[
\cL u= \tr (A(x) D^2 u)= g\;\mbox{ in } \; B_4=B(0,4),
\]
where $g$ has Dini mean oscillation; i.e., $g$ satisfies the condition \eqref{13.24f}.
Then, we have $u\in C^2(\overline{B_1})$.
\end{theorem}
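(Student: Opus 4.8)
The plan is to transplant, to the non-divergence setting, the perturbative scheme used for Theorem~\ref{thm-main-d}, replacing the divergence-form energy estimate by the weak type-$(1,1)$ bound for the associated constant-coefficient operator. First, \eqref{eq:dcmo} forces $\omega_A(r)\to 0$, so $A\in\mathrm{VMO}$; the Calder\'on--Zygmund theory for non-divergence equations gives $u\in W^{2,p}_{\loc}(B_4)$ for every $p<\infty$, and since $g$ is likewise $\mathrm{VMO}$ we have $g\in L^p_{\loc}(B_4)$ for every $p<\infty$, hence $D^2u\in L^p_{\loc}$. By a Dini--Campanato criterion it suffices to find $R_0>0$ and a function $\varrho$ with $\int_0^{R_0}\varrho(r)\,\frac{dr}{r}<\infty$ such that
\[
\omega_{D^2u}(x_0,r):=\fint_{B_r(x_0)}\bigl|D^2u-(D^2u)_{B_r(x_0)}\bigr|\le\varrho(r)\qquad(x_0\in B_1,\ 0<r\le R_0),
\]
uniformly in $x_0$; then $(D^2u)_{B_r(x_0)}$ converges as $r\to 0^+$ to a continuous representative of $D^2u$ on $\overline{B_1}$, with modulus of continuity controlled by $\int_0^{(\cdot)}\varrho(r)\,\frac{dr}{r}$ --- which is also the source of the explicit bound mentioned after Theorem~\ref{thm-main-d}.

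Fix $x_0\in B_1$ and $0<r\le R_0$, abbreviate $\bar A=\bar A_{B_r(x_0)}$, $\bar g=\bar g_{B_r(x_0)}$, and let $v$ solve $\tr(\bar A\,D^2v)=\bar g$ in $B_r(x_0)$ with $v=u$ on $\partial B_r(x_0)$, so that $w:=u-v$ solves
\[
\tr(\bar A\,D^2w)=(g-\bar g)+(\bar A-A)\!:\!D^2u=:h\ \text{ in }B_r(x_0),\qquad w=0\ \text{ on }\partial B_r(x_0).
\]
Split $w=W+Z$, with $W$ the $\bar A$-Newtonian potential on $\bR^d$ of $h\mathbf{1}_{B_r(x_0)}$ and $Z=w-W$ being $\bar A$-harmonic in $B_r(x_0)$; likewise $D^2v$ is the Hessian of an $\bar A$-harmonic function plus a fixed constant matrix. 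The weak type-$(1,1)$ estimate for the constant-coefficient operator --- in its standard form bounding the $L^{p_0}$ norm ($p_0\in(0,1)$) by the $L^1$ norm on sets of finite measure --- bounds the $L^{p_0}$ norms of $D^2W$ and $D^2Z$ by $\fint_{B_r(x_0)}|h|$; the $\bar A$-harmonic Hessians enjoy the usual interior oscillation decay (valid in $L^{p_0}$, after passing to first derivatives), and the oscillation of $D^2W$ on $B_{\kappa r}(x_0)$ ($\kappa\in(0,\tfrac14]$) is estimated by splitting $h\mathbf{1}_{B_r(x_0)}$ into near and far parts around $B_{\kappa r}(x_0)$, subtracting on each the relevant average of $h$ (using that the Hessian of the $\bar A$-potential of the indicator of a ball is constant inside that ball), and applying weak type-$(1,1)$ to the near part and the gradient estimate for the smooth potential to the far part. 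Controlling $\fint_{B_r(x_0)}|h|$ and the mean oscillations of $h$ on subballs in terms of $\omega_g(r)$, of $\fint_{B_r(x_0)}|A-\bar A|\,|D^2u|$, and of $\omega_{D^2u}(x_0,r)$ (the last entering with a factor $\kappa$), one reaches
\[
\omega_{D^2u}(x_0,\kappa r)\le C_1\kappa\,\omega_{D^2u}(x_0,r)+C_1\Bigl(\omega_g(r)+\fint_{B_r(x_0)}|A-\bar A|\,|D^2u|+\kappa\!\int_{\kappa r}^{r}\!\frac{\omega_g(s)}{s}\,ds\Bigr).
\]

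Treating $\fint_{B_r(x_0)}|A-\bar A|\,|D^2u|$ --- i.e.\ making the whole argument run on the $L^1$-averaged Dini data --- is the step I expect to be the main obstacle. Using $|A-\bar A|\le 2\Lambda$ and peeling off the average, $\fint_{B_r(x_0)}|A-\bar A|\,|D^2u|\le 2\Lambda\,\omega_{D^2u}(x_0,r)+\bigl|(D^2u)_{B_r(x_0)}\bigr|\,\omega_A(r)$: the first term is absorbed into $C_1\kappa\,\omega_{D^2u}(x_0,r)$ on the left, while the second is harmless once one knows $D^2u$ is \emph{locally bounded}, $\|D^2u\|_{L^\infty(B_1)}\le K$. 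This boundedness is not available a priori; it must be bootstrapped out of the iteration itself, since summability of $\omega_{D^2u}(x_0,\cdot)$ along a dyadic sequence of radii keeps the averages $(D^2u)_{B_{\kappa^{j}r}(x_0)}$ bounded, whereupon a continuation argument in the scale variable closes the estimate with an absolute $K$. One then obtains the contraction $\omega_{D^2u}(x_0,\kappa r)\le\tfrac12\,\omega_{D^2u}(x_0,r)+C\bigl(\omega_A(r)+\omega_g(r)+\kappa\int_{\kappa r}^{r}\frac{\omega_g(s)}{s}\,ds\bigr)$ for $\kappa$, then $R_0$, chosen small, uniformly in $x_0\in B_1$; the right-hand forcing term is Dini (the last piece contributes $\kappa\log(1/\kappa)\int_0^{R_0}\frac{\omega_g(s)}{s}\,ds$ after a Fubini interchange), so iterating and summing a geometric series gives $\int_0^{R_0}\omega_{D^2u}(x_0,r)\,\frac{dr}{r}<\infty$ uniformly in $x_0$, which completes the proof. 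That $g$ has only $L^1$-Dini mean oscillation is exactly why a weak type-$(1,1)$ estimate --- not an $L^2$ Calder\'on--Zygmund bound --- must be used throughout; this is the point of contact with the weak-$(1,1)$ results announced in the abstract. The remaining boundary and compactness bookkeeping parallels that in the proof of Theorem~\ref{thm-main-d}.
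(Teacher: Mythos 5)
Your skeleton coincides with the paper's: freeze the coefficients at the ball-average $\bar A$, write $u=v+w$ with $w\in W^{2,2}_0$ the zero-Dirichlet solution of the constant-coefficient problem forced by $(g-\bar g)+(\bar A-A)\!:\!D^2u$, control $w$ by the weak type-$(1,1)$ estimate for $\cL_0$ (hence in $L^{p_0}$, $p_0<1$), use the interior $C^3$ estimate for $v$ to get oscillation decay, and iterate across scales. Your extra split $w=W+Z$ into Newtonian potential plus $\bar A$-harmonic correction is superfluous; the paper's Lemma~\ref{lem-weak11-nd} already gives the weak-$(1,1)$ bound directly for the Dirichlet solution.

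The step you single out as the main obstacle is exactly where your argument fails, and the fix you propose does not work. You bound $\fint_{B_r}|A-\bar A|\,|D^2u|\le 2\Lambda\,\omega_{D^2u}(x_0,r)+|(D^2u)_{B_r}|\,\omega_A(r)$ and claim the first term is ``absorbed into $C_1\kappa\,\omega_{D^2u}(x_0,r)$.'' It cannot be. Substituting into your displayed recursion gives
\[
\omega_{D^2u}(x_0,\kappa r)\le\bigl(C_1\kappa+2\Lambda C_1\bigr)\omega_{D^2u}(x_0,r)+\cdots,
\]
and $2\Lambda C_1$ is a fixed $O(1)$ number independent of $\kappa$, so no choice of $\kappa$ yields the contraction $\le\tfrac12\omega_{D^2u}(x_0,r)$ you assert. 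The paper never peels off the average: the freeze term is kept whole as $\|D^2u\|_{L^\infty(B(\bar x,r))}\,\omega_A(r)$, so the only $\phi$-dependent term in the recursion carries the factor $\kappa$ and the contraction follows by shrinking $\kappa$. The price is that $\|D^2u\|_{L^\infty}$ must first be shown finite; the paper does this under an a priori $C^2$ assumption (removed afterwards by approximation) together with an explicit nested-ball iteration: choose $r_0$ so that $C\int_0^{r_0}\tilde\omega_A(t)/t\,dt\le 3^{-d}$, obtain $\|D^2u\|_{L^\infty(B_{r_k})}\le 3^{-d}\|D^2u\|_{L^\infty(B_{r_{k+1}})}+\text{(lower order)}$ for $r_k=3-2^{1-k}$, and telescope. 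Your ``continuation argument in the scale variable'' names no mechanism, and since your recursion is not a contraction there is nothing to continue.

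A secondary gap: you run Campanato on the $L^1$-mean oscillation $\omega_{D^2u}(x_0,r)=\fint|D^2u-(D^2u)_{B_r}|$, but the weak type-$(1,1)$ bound for $w$ yields only $\bigl(\fint_{B_r}|D^2w|^{p_0}\bigr)^{1/p_0}\lesssim\fint_{B_r}|h|$ for $p_0\in(0,1)$, not an $L^1$ bound on $D^2w$. This is precisely why the paper carries out the whole Campanato scheme with $\phi(\bar x,r)=\inf_{\mathbf q}\bigl(\fint_{B_r}|D^2u-\mathbf q|^{p_0}\bigr)^{1/p_0}$ and the $L^{p_0}$ quasi-triangle inequality; your argument would need the same replacement to make the estimates on $v$ and $w$ compatible.
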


In proving the above theorem, we need to consider the formal adjoint operator defined by
\begin{equation*}	
\cL^* u = \sum_{i,j=1}^d D_{ij}(a^{ij}(x) u)
\end{equation*}
and deal with boundary value problems of the form
\begin{equation}					\label{16.25f}
\cL^* u =\dv^2 \mathbf{g}\;\mbox{ in }\;\Omega;\quad u=\frac{\mathbf{g} \nu \cdot \nu}{A \nu \cdot  \nu}\;\mbox{ on }\;\partial \Omega,
\end{equation}
where $\mathbf{g}=(g^{kl})_{k, l=1}^d$, $\dv^2 \mathbf{g}=\sum_{k, l=1}^d D_{kl} g^{kl}$, with $\mathbf{g} \in L^p(\Omega)$.
The following definition is extracted from Escauriaza and Montaner~\cite{EM2016}.
\begin{definition}			\label{def-adj}
Let $\Omega \subset \bR^d$ be a bounded $C^{1,1}$ domain with unit exterior normal vector $\nu$.
Let $\mathbf{g} \in L^p(\Omega)$, $1<p<\infty$ and $\frac1p+\frac1{p'}=1$.
We say that $u \in L^p(\Omega)$ is an adjoint solution of \eqref{16.25f} if $u$ satisfies
\begin{equation}				\label{16.33f}
\int_\Omega u\, \cL v = \int_\Omega \tr(\mathbf{g}\, D^2 v),
\end{equation}
for any $v$ in $W^{2,p'}(\Omega) \cap W^{1,p'}_0(\Omega)$.
By a local adjoint solution of
\[
\cL^* u =\dv^2 \mathbf{g}\;\mbox{ in }\;\Omega,
\]
we mean a solution in $L^p_{loc}(\Omega)$ that verify \eqref{16.33f} when $v$ is in $W^{2,p'}_0(\Omega)$.
\end{definition}

\begin{theorem}					\label{thm-main-adj}
Suppose the coefficients $A(x)$ of the non-divergent operator $\cL$ have Dini mean oscillation; i.e., $A$ satisfies  \eqref{eq:dcmo}.
Let $u \in L^2(B_4)$ is a local adjoint solution of
\[
\cL^* u= \sum_{i,j=1}^d D_{ij} (a^{ij}(x) u)= \dv^2 \mathbf g \;\mbox{ in } \; B_4=B(0,4),
\]
where $\mathbf g=(g^{kl})_{k, l=1}^d$ have Dini mean oscillation; i.e., $\mathbf g$ satisfy the condition \eqref{13.24f}.
Then, we have $u\in C(\overline{B_1})$.
\end{theorem}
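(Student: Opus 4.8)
The plan is to run the usual freezing-and-comparison iteration, but adapted to the fact that adjoint solutions carry essentially no a priori regularity, so one must work with weak-type quantities throughout, and to isolate a local $L^\infty$ bound for $u$ as a separate input.

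\emph{Set-up and reduction.} Since $A$ and $\mathbf g$ have Dini mean oscillation they are continuous on $\overline{B_3}$ (the standard telescoping estimate); but I will only use the hypotheses in the quantitative form $\int_0^1\omega_A(s)/s\,ds<\infty$ and $\int_0^1\omega_{\mathbf g}(s)/s\,ds<\infty$. I will also need a local bound $\|u\|_{L^\infty(B_2)}<\infty$; this does not come from De Giorgi--Nash--Moser, because adjoint solutions are not subsolutions, and one instead invokes the regularity theory of adjoint (normalized adjoint) solutions with continuous coefficients (see \cite{EM2016} and the references therein). Granting this, it suffices, for $x_0\in B_1$ and $0<r\le r_0:=\tfrac12$, to bound the normalized weak-type oscillation $[u]_r(x_0):=\inf_{c\in\bR}\ \sup_{t>0}\,\frac{t\,\lvert\{x\in B_r(x_0):\lvert u(x)-c\rvert>t\}\rvert}{\lvert B_r(x_0)\rvert}$, uniformly in $x_0$, since Dini control of $[u]_r$ yields a continuous representative of $u$ on $\overline{B_1}$ via a telescoping argument together with Lebesgue differentiation.

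\emph{Comparison on a ball.} Fix $x_0\in B_1$ and $0<r\le r_0$, put $\bar A:=\fint_{B_r(x_0)}A$, $\bar{\mathbf g}:=\fint_{B_r(x_0)}\mathbf g$, and let $\cL_0:=\tr(\bar A D^2\,\cdot\,)$; since $\bar A$ is constant, $\cL_0^*=\cL_0$. Testing \eqref{16.33f} only against $\varphi\in W^{2,2}_0(B_r(x_0))$ — for which $\int_{B_r(x_0)}D^2\varphi=0$, so the constant matrix $\bar{\mathbf g}$ is invisible — shows that $u$ is a local adjoint solution of $\cL_0^* u=\dv^2\mathbf h$ in $B_r(x_0)$ with $\mathbf h:=(\mathbf g-\bar{\mathbf g})+(\bar A-A)u\in L^2(B_r(x_0))$. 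Let $w\in L^2(B_r(x_0))$ solve the adjoint Dirichlet problem \eqref{16.25f}--\eqref{16.33f} for $\cL_0$ with right-hand side $\mathbf h$ (solvability from \cite{EM2016}), and set $v:=u-w$; then $\int v\,\cL_0\varphi=0$ for all $\varphi\in W^{2,2}_0(B_r(x_0))$, so $\cL_0 v=0$ in the interior and $v$ is smooth. I then use two estimates: (i) since $v$ solves a constant-coefficient elliptic equation, for any $q\in(0,1)$ and $\kappa\le\tfrac14$ one has $\|v-\bar v_{B_{\kappa r}(x_0)}\|_{L^\infty(B_{\kappa r}(x_0))}\le C\kappa\,\bigl(\fint_{B_{r/2}(x_0)}\lvert v-c\rvert^q\bigr)^{1/q}$ for any $c$; (ii) the weak type-$(1,1)$ estimate for the adjoint Dirichlet problem of a constant-coefficient operator (Calder\'on--Zygmund theory; see also \cite{EM2016}) gives $\|w\|_{L^{1,\infty}(B_r(x_0))}\le C\|\mathbf h\|_{L^1(B_r(x_0))}$. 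Because $L^{1,\infty}\hookrightarrow L^q$ on a finite-measure space when $q<1$, feeding (i) with $c=c_r$ (a near-optimal constant for $[u]_r(x_0)$) together with (ii) into the quasi-triangle inequality for $\|\cdot\|_{L^{1,\infty}}$ yields
\[[u]_{\kappa r}(x_0)\le C_1\kappa\,[u]_r(x_0)+C_1\kappa^{-d}\fint_{B_r(x_0)}\lvert\mathbf h\rvert\le C_1\kappa\,[u]_r(x_0)+C_1\kappa^{-d}\bigl(\omega_{\mathbf g}(r)+\|u\|_{L^\infty(B_2)}\,\omega_A(r)\bigr),\]
with $C_1=C_1(d,\lambda,\Lambda)$, uniformly in $x_0\in B_1$; the last step uses $\fint_{B_r(x_0)}\lvert\mathbf g-\bar{\mathbf g}\rvert\le\omega_{\mathbf g}(r)$ and $\fint_{B_r(x_0)}\lvert\bar A-A\rvert\,\lvert u\rvert\le\|u\|_{L^\infty(B_2)}\,\omega_A(r)$.

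\emph{Iteration, and where the difficulty lies.} Choosing $\kappa=\kappa(d,\lambda,\Lambda)$ with $C_1\kappa\le\tfrac12$ and iterating along $r=\kappa^n r_0$, using $[u]_{r_0}(x_0)\le\fint_{B_{r_0}(x_0)}\lvert u\rvert\le\|u\|_{L^\infty(B_2)}$ and the Dini conditions \eqref{eq:dcmo}, \eqref{13.24f}, I obtain
\[\sum_{n\ge0}[u]_{\kappa^n r_0}(x_0)\le C\Bigl(\|u\|_{L^\infty(B_2)}+\int_0^{r_0}\tfrac{\omega_{\mathbf g}(s)+\|u\|_{L^\infty(B_2)}\,\omega_A(s)}{s}\,ds\Bigr)<\infty,\]
uniformly in $x_0\in B_1$, with tails tending to $0$; the near-optimal constants $c_{\kappa^n r_0}(x_0)$ are then Cauchy, their limit $\tilde u(x_0)$ is uniformly continuous on $\overline{B_1}$, and $\tilde u=u$ a.e., so $u\in C(\overline{B_1})$ with an explicit modulus. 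The two genuine obstacles are: first, the local $L^\infty$ bound for $u$, without which the coefficient term $\fint_{B_r}\lvert\bar A-A\rvert\,\lvert u\rvert$ cannot be controlled by $\omega_A(r)$ times a fixed constant (the average $\fint_{B_r}\lvert u\rvert$ otherwise blows up as $r\to0$) and which, as noted, is not accessible by a Moser iteration; and second, the choice of quantity to iterate — an $L^2$-based comparison would replace $\omega_A(r),\omega_{\mathbf g}(r)$ by their square roots in the error, which need not be Dini-summable, forcing the use of the weak type-$(1,1)$ estimate for the adjoint Dirichlet problem and hence of the weak-type oscillation $[u]_r$, with the embedding $L^{1,\infty}\hookrightarrow L^q$, $q<1$, being exactly what lets the frozen-coefficient estimate for $v$ be fed back into the recursion. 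Everything else parallels the proofs of Theorems~\ref{thm-main-d} and \ref{thm-main-nd}.
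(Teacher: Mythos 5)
Your core argument --- freezing the coefficients at scale $r$, writing $u=v+w$ with $w$ controlled via a weak type-$(1,1)$ estimate for the adjoint Dirichlet problem of the constant-coefficient operator $\cL_0$, iterating a sub-$L^1$ oscillation, and summing the errors via the Dini condition --- is precisely the paper's strategy. Your $[u]_r$ built from $\norm{\cdot}_{L^{1,\infty}}$ is interchangeable (up to $p$-dependent constants) with the paper's $\phi(\bar x,r)=\inf_{q}\bigl(\fint_{B_r}\abs{u-q}^p\bigr)^{1/p}$ for fixed $p\in(0,1)$, since $L^{1,\infty}(B_r)\hookrightarrow L^p(B_r)$, and your $\kappa$-recursion, the choice $C_1\kappa\le\frac12$, and the telescoping/Lebesgue-point argument to recover a continuous representative all match the paper's steps.

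The one substantive departure is how you obtain $\norm{u}_{L^\infty(B_2)}<\infty$, and this is where the proposal has a real gap. You treat it as an external input, citing ``the regularity theory of adjoint (normalized adjoint) solutions with continuous coefficients (see \cite{EM2016} and the references therein).'' But \cite{EM2016} establishes continuity (hence local boundedness) of adjoint solutions only under \emph{Dini continuity} of $A$, which is strictly stronger than the Dini mean oscillation hypothesis of the theorem; and for coefficients that are merely continuous or VMO the known results (e.g.\ Escauriaza \cite{Es94}) give $L^p_{\loc}$ for every $p<\infty$ but not $L^\infty_{\loc}$ --- indeed \cite{Es94} constructs VMO counterexamples to the weak type-$(1,1)$ bound, which signals that pointwise control of adjoint solutions is exactly what is lost once the modulus of continuity is not of Dini type. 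So the $L^\infty$ bound cannot simply be imported under the present hypotheses. The paper avoids this by deriving it internally: one runs the same comparison \emph{a priori}, assuming $u\in C(B_3)$ (an assumption removed at the end by mollifying $A$ and passing to the limit), obtains the pointwise inequality $\abs{u(\bar x)}\lesssim r^{-d}\norm{u}_{L^1(B(\bar x,r))}+\norm{u}_{L^\infty(B(\bar x,r))}\int_0^r\tilde\omega_A(t)/t\,dt+\int_0^r\tilde\omega_g(t)/t\,dt$, and then sums the resulting $L^\infty$ bounds over the nested balls $B_{r_k}$, $r_k=3-2^{1-k}$, multiplying by $3^{-kd}$ so that the $\norm{u}_{L^\infty}$ term on the right is absorbed once $r$ is small enough that $C\int_0^r\tilde\omega_A(t)/t\,dt\le 3^{-d}$, arriving at $\norm{u}_{L^\infty(B_2)}\lesssim\norm{u}_{L^1(B_4)}+\int_0^1\tilde\omega_g(t)/t\,dt$. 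You should replace your citation by this bootstrap (together with the approximation step) to close the proof; the rest of your outline then goes through.
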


The proofs of Theorems \ref{thm-main-d}, \ref{thm-main-nd}, and \ref{thm-main-adj} are based on Campanato's approach, which was used previously, for instance, in \cite{Giaq83, Lieb87}. The main step of Campanato's approach is to show the mean oscillations of $Du$ (or $D^2u$, or $u$, respectively) in balls vanish in certain order as the radii of balls go to zero.
The main difficulty is that because we only impose the assumption on the $L^1$-mean oscillation of $A$ and $g$, the usual argument based on the $L^2$ (or $L^p$ for $p>1$) estimates does not work in our case.
To this end, we exploit weak type-$(1,1)$ estimates, the proof of which use a duality argument.
We then adapt Campanato's idea in the $L^p$ setting for some $p\in (0,1)$.

\begin{remark}
Theorems \ref{thm-main-d}, \ref{thm-main-nd}, and \ref{thm-main-adj} remain true for corresponding elliptic systems satisfying the Legendre-Hadamard ellipticity condition:
\[
\lambda \abs{\xi}^{2} \,\abs{\eta}^2 \le
\sum_{\alpha,\beta=1}^n\sum_{i,j=1}^d
a^{\alpha\beta}_{ij}(x)\xi_{i}\xi_{j} \eta_\alpha \eta_\beta
\]
for all $x \in \Omega$, $\xi \in \bR^d$, and $\eta \in \bR^n$.
This is because the proofs below use the $W^{1,p}$ (or $W^{2,p})$ solvability for elliptic equations in balls with constant coefficients, which is also available for elliptic systems which satisfy the Legendre-Hadamard ellipticity condition.
See, for instance, \cite{DK11} and the references therein.
\end{remark}

The remaining part of the paper is organized as follows.
In Section~\ref{sec2}, we prove our main results, namely, Theorems \ref{thm-main-d}, \ref{thm-main-nd}, and \ref{thm-main-adj}.
Section~\ref{sec3} is devoted to the weak type-$(1,1)$ estimates under an additional  condition on the mean oscillation $\omega_A$.
Finally in Section~\ref{sec4}, we give an application of our main theorems, which improves a result of Brezis~\cite{Br08} as well as a more recent result by Escauriaza and Montaner~\cite{EM2016}.

\section{Proof of main theorems}				\label{sec2}
To begin with, let us introduce some notation that will be used throughout the proof.
For nonnegative (variable) quantities $A$ and $B$, the relation $A \lesssim B$ should be understood that there is some constant $c>0$ such that $A \le c B$.
We write $A \sim B$ if $A\lesssim B$ and $B \lesssim A$.

\subsection{Preliminary lemmas}
\begin{lemma}			\label{lem01-stein}
Let $B=B(0,1)$ and let $T$ be a bounded linear operator from $L^2(B)$ to $L^2(B)$.
Suppose that for any $\bar y \in B$ and $0<r<\frac12$ we have
\[
\int_{B \setminus B(\bar y, cr)} \abs{Tb} \le C \int_{B(\bar y, r)\cap B} \abs{b}
\]
whenever $b \in L^2(B)$ is supported in $B(\bar y, r)\cap B$, $\int_{B} b =0$, and $c>1$ and $C>0$ are constants.
Then for $f \in L^2(B)$ and any $\alpha>0$, we have
\[
\Abs{\set{x \in B : \abs{Tf(x)} > \alpha}}  \le \frac{C'}{\alpha} \int_{B} \abs{f},
\]
where $C'=C'(d,c,C)$ is a constant.
\end{lemma}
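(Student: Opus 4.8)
The plan is to run the classical Calder\'on--Zygmund argument: decompose $f$ at height $\alpha$ into a good part $g$ and a bad part $b=\sum_j b_j$, dispose of $g$ by the $L^2$-boundedness of $T$, and dispose of $b$ by the hypothesized H\"ormander-type cancellation estimate. Because everything is stated relative to $B$, the one point that needs care is to perform the decomposition \emph{inside} $B$ rather than on all of $\bR^d$.

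First I would fix $\alpha>0$ and Calder\'on--Zygmund-decompose $f$ relative to $B$. Viewing $(B,\abs{\cdot},\dist)$ as a space of homogeneous type, one has a dyadic (Christ-type) lattice on $B$; the stopping-time procedure at level $\alpha$ produces pairwise disjoint ``cubes'' $\set{Q_j}\subset B$ with $\alpha<\fint_{Q_j}\abs{f}\le c_d\alpha$, $\sum_j\abs{Q_j}\le\frac1\alpha\int_B\abs{f}$, $\abs{f}\le\alpha$ a.e.\ on $B\setminus\bigcup_j Q_j$, and each $Q_j\subset B(z_j,r_j)$ with $z_j\in B$ and $\abs{Q_j}\sim r_j^d$. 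The range of small $\alpha$ is trivial: once $\alpha\lesssim\int_B\abs{f}$ one has $\Abs{\set{x\in B:\abs{Tf(x)}>\alpha}}\le\abs{B}\le\frac{C'}{\alpha}\int_B\abs{f}$, so I may assume $\alpha$ is large enough that $r_j<\tfrac12$ for every $j$. Then set $g:=f$ on $B\setminus\bigcup_j Q_j$ and $g:=\fint_{Q_j}f$ on each $Q_j$, and $b_j:=(f-\fint_{Q_j}f)\mathbf{1}_{Q_j}$, $b:=\sum_j b_j=f-g$. Each $b_j\in L^2(B)$ is supported in $Q_j\subset B(z_j,r_j)\cap B$ with $\int_B b_j=0$, so the hypothesis applies to $b_j$ with $\bar y=z_j$, $r=r_j$; and $\sum_j\norm{b_j}_{L^2(B)}^2\le\norm{f}_{L^2(B)}^2$, so $b=\sum_j b_j$ and hence $Tb=\sum_j Tb_j$ in $L^2(B)$.

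For the good part, $\abs{g}\le c_d\alpha$ a.e.\ and $\int_B\abs{g}\le\int_B\abs{f}$, so $\norm{g}_{L^2(B)}^2\le c_d\alpha\int_B\abs{f}$, and by the $L^2$-boundedness of $T$ together with Chebyshev's inequality
\[
\Abs{\Set{x\in B:\abs{Tg(x)}>\tfrac\alpha2}}\le\frac{4}{\alpha^2}\norm{Tg}_{L^2(B)}^2\lesssim\frac1\alpha\int_B\abs{f}.
\]
For the bad part, put $\Omega^*:=\bigcup_j B(z_j,cr_j)$ with $c$ the constant from the hypothesis. Then $\abs{\Omega^*}\le\sum_j\abs{B(z_j,cr_j)}\lesssim\sum_j\abs{Q_j}\le\frac1\alpha\int_B\abs{f}$, while on $B\setminus\Omega^*$ the hypothesis, $\int_{Q_j}\abs{b_j}\le2\int_{Q_j}\abs{f}$, and Fatou's lemma give
\[
\int_{B\setminus\Omega^*}\abs{Tb}\le\sum_j\int_{B\setminus B(z_j,cr_j)}\abs{Tb_j}\le C\sum_j\int_{Q_j}\abs{b_j}\le2C\sum_j\int_{Q_j}\abs{f}\le2C\int_B\abs{f}.
\]
Hence $\Abs{\set{x\in B:\abs{Tb(x)}>\tfrac\alpha2}}\le\abs{\Omega^*}+\frac2\alpha\int_{B\setminus\Omega^*}\abs{Tb}\lesssim\frac1\alpha\int_B\abs{f}$. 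Since $\set{\abs{Tf}>\alpha}\subset\set{\abs{Tg}>\tfrac\alpha2}\cup\set{\abs{Tb}>\tfrac\alpha2}$, adding the two bounds yields the claim.

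I expect no deep obstacle here; the argument is the standard one, and the only genuine care is in adapting the Calder\'on--Zygmund decomposition to the ball $B$ so that the stopping cubes lie inside $B$ and the $b_j$ are supported in $B$ (Christ's dyadic cubes make this clean), and in checking that all stopping cubes may be taken of radius $<\tfrac12$, which is the reason the small-$\alpha$ case is handled separately. I also note that the resulting constant in fact depends on $\norm{T}_{L^2(B)\to L^2(B)}$ as well; one can normalize this operator norm to be $\le1$ at the expense of rescaling the constant $C$ in the hypothesis.
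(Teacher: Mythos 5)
Your proof is correct and is precisely the argument the paper outsources: the paper's proof of this lemma simply cites Stein's Calder\'on--Zygmund/H\"ormander argument and remarks that one should replace the Euclidean decomposition by Christ's dyadic cubes on $B$, which is exactly what you carry out (including the small-$\alpha$ check that the stopping cubes can be taken of radius $<\tfrac12$). Your closing remark that $C'$ in fact also depends on $\norm{T}_{L^2\to L^2}$ is a legitimate observation about the lemma as stated, and it is harmless in the paper's applications since the operators there have $L^2$ norms controlled by $d,\lambda,\Lambda$.
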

\begin{proof}
We refer to Stein \cite[p. 22]{Stein93}, where the proof is based on the Calder\'on-Zygmund decomposition and the domain is assumed to be the whole space.
In our case, we can modify the proof there by using the ``dyadic cubes'' decomposition of $B$; see Christ \cite{Ch90}.
\end{proof}

The following lemma (as well as subsequent lemmas \ref{lem-weak11-nd} and \ref{lem-weak11-adj}) should be classical.
Here, we provide a proof that does not explicitly involve singular integrals, which will be useful for our discussion in Section~\ref{sec3}.

\begin{lemma}			\label{lem02-weak11}
Let $B=B(0,1)$.
Let $L_0$ be an operator with constant coefficients.
For $\vec f \in L^2(B)$ let $u \in  W^{1,2}_0(B)$ be a unique weak solution to
\[
L_0 u= \dv \vec f\;\mbox{ in }\; B;\quad
u=0 \;\mbox{ on } \; \partial B.
\]
Then for any $\alpha>0$, we have
\[
\Abs{\set{x \in B : \abs{Du(x)} > \alpha}}  \le \frac{C}{\alpha} \int_{B} \abs{\vec f}\,dx,
\]
where $C=C(d, \lambda, \Lambda)$.
\end{lemma}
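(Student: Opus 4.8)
The plan is to derive the weak type-$(1,1)$ bound for $Du$ from Lemma~\ref{lem01-stein} by exhibiting the solution operator $T\colon \vec f \mapsto Du$ as a bounded operator on $L^2(B)$ that satisfies the H\"ormander-type cancellation hypothesis of that lemma. Boundedness on $L^2$ is just the classical Caccioppoli/energy estimate: since $L_0$ has constant coefficients, testing the equation with $u$ itself and using ellipticity gives $\norm{Du}_{L^2(B)} \lesssim \norm{\vec f}_{L^2(B)}$, with constant depending only on $d,\lambda,\Lambda$. So the whole matter reduces to verifying the off-diagonal estimate
\[
\int_{B\setminus B(\bar y, cr)} \abs{Tb} \le C \int_{B(\bar y,r)\cap B} \abs{b}
\]
for $b$ supported in $B(\bar y,r)\cap B$ with $\int_B b = 0$, for a suitable fixed $c>1$.

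The key step is therefore an interior (and near-boundary) decay estimate for $Du$ away from the support of the data, exploiting the mean-zero condition. Fix $\bar y\in B$, $0<r<\tfrac12$, and let $b$ be as above; write $v = Tb = Du$ where $L_0 u = \dv b$ in $B$, $u=0$ on $\partial B$. I would dyadically decompose the complement: for $k\ge 1$ let $A_k = \{x\in B : 2^k r \le \abs{x-\bar y} < 2^{k+1}r\}$ (intersected with $B$), stopping once $2^k r \gtrsim 1$, which happens after $O(\log(1/r))$ steps — but crucially the summability will be geometric, not just bounded by the number of steps. On each annulus $A_k$, away from $\supp b$, the function $u$ solves $L_0 u = 0$, so by interior estimates for constant-coefficient equations $\sup_{A_k}\abs{Du} \lesssim (2^k r)^{-d-1}\norm{u}_{L^1(\tilde A_k)}$ on a slightly enlarged annulus $\tilde A_k$. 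To estimate $\norm{u}_{L^1}$ on that annulus, represent $u$ using the Green's function $G(x,y)$ of $L_0$ on $B$ and the mean-zero property of $b$:
\[
u(x) = \int_{B(\bar y,r)\cap B} G(x,y)\, \dv b \,dy = -\int_{B(\bar y,r)\cap B} \nabla_y G(x,y)\cdot b(y)\,dy
= -\int \bigl(\nabla_y G(x,y) - \nabla_y G(x,\bar y)\bigr)\cdot b(y)\,dy,
\]
so that $\abs{u(x)} \lesssim r\,\sup_{y\in B(\bar y,r)}\abs{\nabla^2_y G(x,y)}\int\abs{b}$ for $x$ far from $\bar y$. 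Using the standard bounds $\abs{\nabla_y G(x,y)}\lesssim \abs{x-y}^{1-d}$ and $\abs{\nabla^2_y G(x,y)}\lesssim \abs{x-y}^{-d}$ (valid for the constant-coefficient operator, away from the diagonal and with care near $\partial B$), one gets on $A_k$ the pointwise bound $\abs{u(x)}\lesssim r (2^k r)^{-d}\int\abs{b}$, hence $\abs{Du(x)}\lesssim r(2^k r)^{-d-1}\int\abs{b}$, and integrating over $A_k$ (volume $\lesssim (2^k r)^d$) gives $\int_{A_k}\abs{Du}\lesssim 2^{-k}\int\abs{b}$. Summing the geometric series in $k$ yields exactly $\int_{B\setminus B(\bar y,cr)}\abs{Tb}\lesssim \int\abs{b}$, with a constant depending only on $d,\lambda,\Lambda$. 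Lemma~\ref{lem01-stein} then gives the claimed weak-$(1,1)$ bound.

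The main obstacle is the behavior near $\partial B$: the Green's function gradient and Hessian estimates I want are classical in the interior but need the correct global versions up to the boundary of $B$ (where $u=0$), and one must also handle the case $\bar y$ close to $\partial B$, so that $B(\bar y,r)$ meets the boundary; then $u$ need not satisfy a zero-average-type cancellation against $\nabla_y G(x,\bar y)$ in the naive way. The clean fix is to use that $G(x,\cdot)$ vanishes on $\partial B$ together with boundary regularity of $G$ for the constant-coefficient operator on the ball (or on a half-space model after flattening), which gives the same $\abs{x-y}^{-d}$-type Hessian bound up to the boundary; alternatively one can reflect. A second minor point is making the ``slightly enlarged annulus'' $\tilde A_k$ and the constant $c$ consistent so that $\tilde A_k$ stays in the zero set of the right-hand side and within $B$; choosing $c = 4$ (say) and $\tilde A_k$ doubling each $A_k$ suffices. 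Everything else — the energy estimate, the dyadic summation, and the application of Lemma~\ref{lem01-stein} — is routine. It is worth noting that the point of giving this ``no singular integrals'' argument, as the authors indicate, is that the same scheme will carry over in Section~\ref{sec3} to the variable-coefficient operator $L$ under the stronger Dini assumption on $\omega_A$.
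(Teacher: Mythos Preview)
Your argument is essentially correct but takes a genuinely different route from the paper. You estimate $Du$ on dyadic annuli via pointwise Green's function bounds: write $u(x)=-\int(\nabla_y G(x,y)-\nabla_y G(x,\bar y))\cdot b(y)\,dy$, invoke $\abs{\nabla^2_y G(x,y)}\lesssim\abs{x-y}^{-d}$, and then pass from $u$ to $Du$ by interior gradient estimates. The paper instead uses a \emph{duality} argument: for $\vec g$ supported in a single dyadic annulus $B(\bar y,2R)\setminus B(\bar y,R)$, it solves the adjoint problem $L_0^*v=\dv\vec g$ in $B$, uses the identity $\int Du\cdot\vec g=\int \vec b\cdot(Dv-\overline{Dv})$, and then bounds $\norm{D^2 v}_{L^\infty(B(\bar y,r)\cap B)}$ by interior/boundary estimates for $L_0^*v=0$ in $B(\bar y,R)\cap B$ together with the global energy bound $\norm{Dv}_{L^2(B)}\lesssim\norm{\vec g}_{L^2(B)}$. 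This yields $\norm{Du}_{L^1}$ on the annulus $\lesssim rR^{-1}\norm{\vec b}_{L^1}$, and the same geometric sum.

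The practical difference is exactly the point you mention in your last sentence, but your own argument does not realize it: your proof \emph{is} a singular-integral proof (H\"ormander condition on the kernel $\nabla_x\nabla_y G$), whereas the paper's duality argument deliberately avoids kernel estimates. This matters for Section~\ref{sec3}: there $L$ has variable coefficients with only Dini mean oscillation, so no usable pointwise Green's-function Hessian bounds are available, and your scheme does not carry over. The paper's scheme does, because the required input on $v$ --- a modulus-of-continuity estimate for $Dv$ when $L^*v=0$ in a ball --- is precisely what Theorem~\ref{thm-main-d} (via \eqref{eq14.00}) supplies. A secondary advantage of the paper's route is that the boundary issue you flag is absorbed into the standard boundary $C^2$ estimate for solutions of $L_0^*v=0$ with zero Dirichlet data, rather than requiring global Green's-function Hessian bounds up to $\partial B$.
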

\begin{proof}
Note that the map $T: \vec f \mapsto Du$ is a bounded linear operator on $L^2(B)$.
It is enough to show that $T$ satisfies the hypothesis of Lemma~\ref{lem01-stein}.
We set $c=2$.
Fix $\bar y \in B$ and $0<r<\frac12$, let $\vec b \in L^2(B)$ be supported in $B(\bar y, r)\cap B$ with mean zero.
Let $u \in W^{1,2}_0(B)$ be the unique weak solution of
\[
L_0 u= \dv \vec b\;\mbox{ in }\; B;\quad
u=0 \;\mbox{ on } \; \partial B.
\]
For any $R\ge cr$ such that $B\setminus B(\bar y, R) \neq \emptyset$ and $\vec g \in C^\infty_c((B(\bar y, 2R)\setminus B(\bar y,R))\cap B)$,
let  $v \in W^{1,2}_0(B)$ be a weak solution of
\[
L_0^* v= \dv \vec g\;\mbox{ in }\; B;\quad
v=0 \;\mbox{ on } \; \partial B,
\]
where $L_0^*$ is the adjoint operator of $L_0$.
By the definition of weak solutions, we have the identity
\[
\int_{B} Du \cdot \vec g = \int_{B} \vec b \cdot Dv = \int_{B(\bar y ,r) \cap B} \vec b \cdot \left(Dv-\overline{Dv}_{B(\bar y ,r) \cap B} \right).	
\]
Therefore,
\begin{align}
						\label{14.23m}
\Abs{\int_{(B(\bar y, 2R)\setminus B(\bar y,R))\cap B} Du \cdot \vec g\,} &\le \norm{\vec b}_{L^1(B(\bar y, r) \cap B)} \norm{Dv-\overline{Dv}_{B(\bar y ,r) \cap B}}_{L^\infty(B(\bar y, r)\cap B)}\\
						\label{14.18m}
&\lesssim r \norm{\vec b}_{L^1(B(\bar y, r) \cap B)} \norm{D^2v}_{L^\infty(B(\bar y, r)\cap B)}.
\end{align}
Since $L_0^* v =0$ in $B(\bar y, R)\cap B$ and $r\le R/c=R/2$, by interior and boundary estimates for elliptic systems with constant coefficients, we get
\begin{align*}
\norm{D^2v}_{L^\infty(B(\bar y, r)\cap B)}
&\lesssim R^{-1-\frac{d}2} \norm{Dv}_{L^2(B(\bar y, R)\cap B)} \lesssim R^{-1-\frac{d}2} \norm{Dv}_{L^2(B)} \\
&\lesssim R^{-1-\frac{d}2} \norm{\vec g}_{L^2(B)}= R^{-1-\frac{d}2} \norm{\vec g}_{L^2((B(\bar y, 2R)\setminus B(\bar y,R))\cap B)}.
\end{align*}
Therefore, by the duality, we get
\[
\norm{Du}_{L^2((B(\bar y, 2R)\setminus B(\bar y,R))\cap B)}
\lesssim r R^{-1-\frac{d}2} \norm{\vec b}_{L^1(B(\bar y, r)\cap B)},
\]
and thus, we have
\begin{equation}
                            \label{eq4.37}
\norm{Du}_{L^1((B(\bar y, 2R)\setminus B(\bar y,R))\cap B)}
\lesssim r R^{-1}\norm{\vec b}_{L^1(B(\bar y, r) \cap B)}.
\end{equation}
Now let $N$ be the smallest positive integer such that $B\subset B(\bar y, 2^{N}cr)$.
By taking $R=cr,2cr,\ldots,2^{N-1} cr$ in \eqref{eq4.37}, we have
\begin{equation}			\label{14.06m}
\int_{B \setminus B(\bar y, cr)} \abs{D u} \lesssim \sum_{k=1}^{N} 2^{-k} \norm{\vec b}_{L^1(B(\bar y, r)\cap B)} \sim \int_{B(\bar y, r)\cap B} \abs{\vec b}.
\end{equation}
Therefore, $T$ satisfies the hypothesis of Lemma~\ref{lem01-stein}.
The lemma is proved.
\end{proof}

\begin{lemma}			\label{lem03-dini}
Let $\omega$ be a nonnegative bounded function.
Suppose there is $c_1, c_2 >0$ and $0<\kappa<1$ such that
\begin{equation}					\label{eq14.59sa}
c_1 \omega(t) \le \omega(s) \le c_2 \omega(t)\quad\text{whenever }\;\kappa t \le s \le  t\;\mbox{ and }\; 0<t<r.
\end{equation}
Then, we have
\[
\sum_{i=0}^\infty \omega(\kappa^i r) \lesssim \int_0^r \frac{\omega(t)}{t}\,dt.
\]
\end{lemma}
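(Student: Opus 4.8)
The plan is to bound the sum by the integral through a term‑by‑term comparison on the geometric intervals $I_i:=[\kappa^{i+1}r,\kappa^i r]$, $i=0,1,2,\dots$, which partition $(0,r]$ up to endpoints. The point of this choice is that on $I_i$ the right endpoint is exactly $t=\kappa^i r$ and the left endpoint is $\kappa t=\kappa^{i+1}r$, so hypothesis \eqref{eq14.59sa} says precisely that $\omega$ restricted to $I_i$ is comparable to the single constant $\omega(\kappa^i r)$. Thus the sum $\sum_i\omega(\kappa^i r)$ should be recoverable, up to the factor $c_1$, from $\sum_i\int_{I_i}\omega(t)\,\frac{dt}{t}$, which telescopes to $\int_0^r\omega(t)\,\frac{dt}{t}$.

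Carrying this out, I would first fix $i\ge 0$ and apply \eqref{eq14.59sa} with $t=\kappa^i r\in(0,r]$: for every $s\in I_i=[\kappa t,t]$ one has $c_1\,\omega(\kappa^i r)\le\omega(s)$. Dividing by $s$, integrating over $I_i$, and using the elementary identity $\int_{I_i}\frac{ds}{s}=\ln(\kappa^i r)-\ln(\kappa^{i+1}r)=\ln(1/\kappa)>0$, this gives
\[
\omega(\kappa^i r)\;\le\;\frac{1}{c_1\ln(1/\kappa)}\int_{\kappa^{i+1}r}^{\kappa^i r}\frac{\omega(s)}{s}\,ds .
\]
Next I would sum over $i\ge 0$. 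Since the intervals $I_i$ are pairwise disjoint apart from endpoints and $\bigcup_{i\ge 0}I_i=(0,r]$, the right‑hand sides add up to $\frac{1}{c_1\ln(1/\kappa)}\int_0^r\frac{\omega(s)}{s}\,ds$, which is exactly the asserted inequality with implicit constant $\tfrac{1}{c_1\ln(1/\kappa)}$ depending only on $c_1$ and $\kappa$. (Notice that only the lower comparability $c_1\omega(t)\le\omega(s)$ is needed; the constant $c_2$ plays no role in this argument.)

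I do not expect a genuine obstacle here: the whole thing is an elementary integral comparison. The only point deserving a word of care is the top interval $i=0$, which forces an application of \eqref{eq14.59sa} at the endpoint value $t=r$ rather than at $t<r$. This is harmless in every use we make of the lemma, where the relevant $\omega$ (such as $\omega_A$) is continuous, so the estimate valid on $(0,r)$ passes to $t=r$ by a limiting argument; alternatively, one may simply discard the $i=0$ term and retain only $\sum_{i\ge 1}\omega(\kappa^i r)\le\frac{1}{c_1\ln(1/\kappa)}\int_0^{\kappa r}\frac{\omega(s)}{s}\,ds$, which is already what the applications require.
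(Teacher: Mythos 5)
Your argument is correct and is precisely the "comparison principle for Riemann integrals" that the paper invokes without elaboration: you bound each term $\omega(\kappa^i r)$ by $\tfrac{1}{c_1\ln(1/\kappa)}\int_{\kappa^{i+1}r}^{\kappa^i r}\omega(s)\,\tfrac{ds}{s}$ using the lower comparability in \eqref{eq14.59sa}, and sum the telescoping integrals. Your remark about the endpoint $t=r$ (the hypothesis is stated for $0<t<r$, strict) is a fair point of care and you resolve it correctly; otherwise this is the same proof the paper leaves to the reader.
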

\begin{proof}
Immediate from the comparison principle for Riemann integrals.
\end{proof}

\subsection{Proof of Theorem~\ref{thm-main-d}}
We shall derive an a priori estimate of the modulus of continuity of $Du$ by assuming that $u$ is in $C^1(B_3)$. The general case follows from a standard approximation argument; see, for instance, \cite[p. 134]{Dong2012}.
For $\bar x \in B_3$ and $0<r<\frac13$, we consider the quantity
\[
\phi(\bar x,r):=\inf_{\vec q\in \bR^d}\left( \fint_{B(\bar x,r)}
\abs{Du - \vec q}^p \right)^{\frac1p},
\]
where $0<p<1$ is some fixed exponent.
First of all, we note that
\begin{equation}				\label{eq12.14f}
\phi(\bar x, r) \le \left(\fint_{B(\bar x,r)} \abs{Du}^p\right)^{\frac1p} \lesssim r^{-d} \norm{Du}_{L^1(B(\bar x,r))}.
\end{equation}
We want to control the quantity $\phi(\bar x, r)$.
To this end, we decompose $u=v+w$, where $w \in W^{1,2}_0(B(\bar x, r))$ is the weak solution of the problem
\[
L_0 w = -\dv ((A- \bar A) \nabla u) +\dv (\vec g-\bar{\vec g})\;\mbox{ in }\;B(\bar x, r);\quad
w=0 \;\mbox{ on }\;\partial B(\bar x, r).
\]
Here and below, we use the simplified notation
\[
\bar A=\bar A_{B(\bar x, r)},\quad \bar{\vec g} = \bar{\vec g}_{B(\bar x, r)},
\]
and $L_0$ is the elliptic operator with the constant coefficients $\bar A$.
By Lemma~\ref{lem02-weak11} with scaling, we have
\begin{equation}
                                    \label{eq10.20}
\abs{\set{x\in B(\bar x, r): \abs{Dw(x)} > \alpha}}
\lesssim \frac{1}{\alpha}\left(\norm{Du}_{L^\infty(B(\bar x, r))} \int_{B(\bar x, r)} \abs{A-\bar A} +  \int_{B(\bar x, r)}  \abs{\vec g -\bar {\vec g}} \,\right).
\end{equation}
For any given $0<p<1$, recall the formula
\begin{align*}
\int_{B(\bar x, r)} \abs{Dw}^p
&=\int_0^\infty p\alpha^{p-1}\abs{\set{x\in B(\bar x, r): \abs{Dw(x)} > \alpha}}\,d\alpha\\
&=\int_0^\tau + \int_\tau^\infty p\alpha^{p-1}\abs{\set{x\in B(\bar x, r): \abs{Dw(x)} > \alpha}}\,d\alpha,
\end{align*}
where $\tau>0$ is to be determined in a moment.
When $0< \alpha \le \tau$, we bound $\abs{\set{x\in B(\bar x, r): \abs{Dw(x)} > \alpha}}$ simply by $\abs{B(\bar x, r)}= C r^d$.
When $\alpha > \tau$, we use \eqref{eq10.20}. It then follows that
\[
\int_{B(\bar x, r)} \abs{Dw}^p\le \tau^p\abs{B(\bar x, r)}+\frac{p}{1-p}\,\tau^{p-1}\left(\norm{Du}_{L^\infty(B(\bar x, r))} \int_{B(\bar x, r)} \abs{A-\bar A} +  \int_{B(\bar x, r)}  \abs{\vec g -\bar {\vec g}} \,\right).
\]
By optimizing over $\tau$,
we get
\[
\int_{B(\bar x, r)} \abs{Dw}^p\le \frac{1}{1-p}\,\abs{B(\bar x, r)}^{1-p} \left(\norm{Du}_{L^\infty(B(\bar x, r))} \int_{B(\bar x, r)} \abs{A-\bar A} +  \int_{B(\bar x, r)}  \abs{\vec g -\bar {\vec g}} \,\right)^p.
\]
Therefore, we have
\begin{equation}				\label{eq15.50a}
\left(\fint_{B(\bar x, r)} \abs{Dw}^p \right)^{\frac1p} \lesssim \omega_A(r) \,\norm{Du}_{L^\infty(B(\bar x, r))} +  \omega_g(r).
\end{equation}
On the other hand, note that $v=u-w$ satisfies
\[
L_0 v =0 \;\mbox{ in }\;B(\bar x, r),
\]
and that for any constant vector $\vec q\in \bR^d$, the same equation is satisfied by $Dv - \vec q$.
By the interior estimates for elliptic systems with constant coefficients, we have
\begin{equation}			\label{int-reg}
\norm{D^2 v}_{L^\infty(B(\bar x, \frac12 r))} \le C_0 r^{-1} \left(\fint_{B(\bar x, r)} \abs{Dv - \vec q}^p\,\right)^{\frac1p},
\end{equation}
where $C_0>0$ is an absolute constant.
Let $0< \kappa < \frac12$ to be a number to be fixed later.
Then, we have
\begin{equation}				\label{eq15.50b}
\left(\fint_{B(\bar x, \kappa r)} \abs{Dv - \overline{Dv}_{B(\bar x, \kappa r)}}^p \right)^{\frac1p} \le 2\kappa r \norm{D^2 v}_{L^\infty(B(\bar x, \frac12 r))} \le 2C_0 \kappa \left(\fint_{B(\bar x, r)} \abs{Dv - \vec q}^p \, \right)^{\frac1p}.
\end{equation}
Here, we recall the facts that for $0<p<1$, we have for all $a, b \ge 0$ that
\[
(a+b)^p \le a^p + b^p, \quad (a^p+ b^p)^{\frac1p} \le 2^{\frac1p} (a+b),
\]
and for any measure space $(X, \mu)$, we have
\[
\norm{f+g}_{L^p(X,\mu)} \le 2^{(1-p)/p} \left( \norm{f}_{L^p(X, \mu)} + \norm{g}_{L^p(X, \mu)}\right).
\]
By using the decomposition $u=v+w$, we obtain from \eqref{eq15.50b} that
\begin{align*}
&\left(\fint_{B(\bar x, \kappa r)} \abs{Du - \overline{Dv}_{B(\bar x, \kappa r)}}^p \right)^{\frac1p} \\
&\qquad\qquad \le 2^{\frac{1-p}p} \left(\fint_{B(\bar x, \kappa r)} \abs{Dv - \overline{Dv}_{B(\bar x, \kappa r)}}^p \right)^{\frac1p}+ C\left(\fint_{B(\bar x, \kappa r)} \abs{Dw}^p \right)^{\frac1p}\\
&\qquad \qquad \le  2^{\frac2p-1}C_0\kappa \left(\fint_{B(\bar x, r)} \abs{Du - \vec q}^p \right)^{\frac1p} +C (\kappa^{-\frac{d}{p}}+1) \left(\fint_{B(\bar x, r)} \abs{Dw}^p \right)^{\frac1p}.
\end{align*}
Since $\vec q\in \bR^d$ is arbitrary, by using \eqref{eq15.50a}, we thus obtain
\[
\phi(\bar x,\kappa r) \le 2^{\frac2p-1}C_0 \kappa \, \phi(\bar x, r)+ C (\kappa^{-\frac{d}{p}}+1) \left( \omega_A(r) \,\norm{Du}_{L^\infty(B(\bar x, r))} +  \omega_g(r) \right).
\]
Now we choose $\kappa$ sufficiently small so that $2^{\frac2p}C_0\kappa \le 1$.
Then, we obtain
\[
\phi(\bar x, \kappa r) \le \frac12 \phi(\bar x, r)+ C \left( \omega_A(r) \,\norm{Du}_{L^\infty(B(\bar x, r))} +  \omega_g(r) \right).
\]
By iterating, for $j=1,2,\ldots$, we get
\[
\phi(\bar x, \kappa^j r) \le 2^{-j} \phi(\bar x, r) +C \norm{Du}_{L^\infty(B(\bar x,r))} \sum_{i=1}^{j} 2^{1-i} \omega_A(\kappa^{j-i} r) + C \sum_{i=1}^{j} 2^{1-i} \omega_g(\kappa^{j-i} r).
\]
Therefore, we have
\begin{equation}				\label{eq22.25f}
\phi(\bar x, \kappa^j r) \le 2^{-j} \phi(\bar x, r) +C \norm{Du}_{L^\infty(B(\bar x, r))}\,\tilde \omega_A(\kappa^{j} r) + C \tilde \omega_g(\kappa^{j} r),
\end{equation}
where we set
\begin{equation}				\label{eq10.54}
\tilde \omega_{\bullet}(t)=\sum_{i=1}^{\infty} \frac{1}{2^{i}} \left(\omega_{\bullet}(\kappa^{-i}t)\, [\kappa^{-i}t \le 1] + \omega_{\bullet}(1)\, [\kappa^{-i}t >1] \right).
\end{equation}
Here, we used Iverson bracket notation; i.e., $[P]=1$ if $P$ is true and $[P]=0$ otherwise.
We remark that $\int_0^1 \tilde \omega_{\bullet}(t) / t \,dt <\infty$; see \cite[Lemma~1]{Dong2012}.

Now, we take $\vec q_{\bar x,r}\in \bR^d$ to be such that
\[
\phi(\bar x,r)=\left( \fint_{B(\bar x,r)} \abs{Du - \vec q_{\bar x,r}}^p\, \right)^{\frac1p}.
\]
Similarly, we find $\vec q_{\bar x,\kappa r} \in \bR^d$, et cetera.
Since we have
\[
\abs{\vec q_{\bar x,\kappa r} - \vec q_{\bar x, r}}^p \le
\abs{Du(x)-\vec q_{\bar x,r}}^p + \abs{Du(x) - \vec q_{\bar x,\kappa r}}^p,
\]
by taking average over $x \in B(\bar x,\kappa r)$ and then taking $p$th root, we obtain
\[
\abs{\vec q_{\bar x,\kappa r} -\vec q_{\bar x,r}} \lesssim \phi(\bar x,\kappa r) + \phi(\bar x, r).
\]
Then, by iterating, we get
\[
\abs{\vec q_{\bar x,\kappa^i r} - \vec q_{\bar x, r}} \lesssim \sum_{j=0}^i \phi(\bar x, \kappa^j r).
\]
Since the right-hand side of \eqref{eq22.25f} goes to zero as $j\to \infty$, by the assumption that $u \in C^1(B_3)$, we find
\[
\lim_{i\to \infty} \vec q_{\bar x,\kappa^i r}=Du(\bar x).
\]
Therefore, by taking $i\to \infty$, using \eqref{eq22.25f} and Lemma~\ref{lem03-dini}, we get
\begin{equation}			\label{eq13.13}
\abs{Du(\bar x)-\vec q_{\bar x,r}} \lesssim \phi(\bar x, r)+
\norm{Du}_{L^\infty(B(\bar x,r))} \int_0^r \frac{\tilde\omega_A(t)}t \,dt+\int_0^r \frac{\tilde\omega_g(t)}t \,dt.
\end{equation}
Indeed, it is easy to see that $\omega_A$ and $\omega_g$ satisfy \eqref{eq14.59sa}; see, for instance, \cite[p. 7]{Y.Li2016}.
By the definition \eqref{eq10.54}, so do $\tilde \omega_A$ and $\tilde \omega_g$.
By averaging the inequality
\[
\abs{\vec q_{\bar x, r}}^p \le \abs{Du(x) -\vec q_{\bar x, r}}^p + \abs{Du(x)}^p
\]
over $x \in B(\bar x, r)$ and taking $p$th root, we get
\[
\abs{\vec q_{\bar x, r}} \le 2^{\frac1p} \phi(\bar x, r) + 2^{\frac1p} \left(\fint_{B(\bar x,r)} \abs{Du}^p\right)^{\frac1p}.
\]
Therefore, by \eqref{eq13.13} and \eqref{eq12.14f}, we get
\[
\abs{Du(\bar x)}  \lesssim r^{-d} \norm{Du}_{L^1(B(\bar x,r))} + \norm{Du}_{L^\infty(B(\bar x,r))} \int_0^r \frac{\tilde\omega_A(t)}t \,dt+\int_0^r \frac{\tilde\omega_g(t)}t \,dt.
\]
Now, taking supremum for $\bar x\in B(x_0, r)$, where $x_0 \in B_2$ and $r<\frac 13$, we have
\[
\norm{Du}_{L^\infty(B(x_0, r))} \le
C \left( r^{-d} \norm{Du}_{L^1(B(x_0, 2r))} + \norm{Du}_{L^\infty(B(x_0, 2r))} \int_0^r \frac{\tilde\omega_A(t)}t \,dt +\int_0^r \frac{\tilde\omega_g(t)}t \,dt \right).
\]
We fix $r_0<\frac13$ such that for any $0<r\le r_0$,
\[
C \int_0^{r} \frac{\tilde\omega_A(t)}t \,dt \le \frac1{3^d}.
\]
Then, we have for any $x_0 \in B_2$ and $0<r\le r_0$ that
\[
\norm{Du}_{L^\infty(B(x_0, r))} \le
3^{-d}\norm{Du}_{L^\infty(B(x_0,2r))} + C r^{-d} \norm{Du}_{L^1(B(x_0,2r))} + C \int_0^r \frac{\tilde\omega_g(t)}t \,dt.
\]
For $k=1,2,\ldots$, denote $r_k=3-2^{1-k}$.
Note that $r_{k+1}-r_k=2^{-k}$ for $k\ge 1$ and $r_1=2$.
For $x_0\in B_{r_k}$ and $r\le 2^{-k-2}$, we have $B(x_0, 2r) \subset B_{r_{k+1}}$. We take $k_0\ge 1$ sufficiently large such that $2^{-k_0-2}\le r_0$.
It then follows that for any $k\ge k_0$,
\[
\norm{Du}_{L^\infty(B_{r_k})} \le 3^{-d} \norm{Du}_{L^\infty(B_{r_{k+1}})}+C 2^{kd} \norm{Du}_{L^1(B_4)} + C \int_0^{1} \frac{\tilde\omega_g(t)}t \,dt.
\]
By multiplying the above by $3^{-kd}$ and then summing over $k=k_0, k_0+1,\ldots$, we reach
\[
\sum_{k=k_0}^\infty 3^{-kd}\norm{Du}_{L^\infty(B_{r_k})} \le \sum_{k=k_0}^\infty 3^{-(k+1)d} \norm{Du}_{L^\infty(B_{r_{k+1}})}+C \norm{Du}_{L^1(B_4)} + C \int_0^{1} \frac{\tilde\omega_g(t)}t \,dt.
\]
Since we assume that $u\in C^1(B_3)$, the summations on both sides are convergent.
Therefore, we have
\begin{equation}					\label{eq10.23m}
\norm{Du}_{L^\infty(B_2)} \lesssim  \norm{Du}_{L^1(B_4)} + \int_0^{1} \frac{\tilde\omega_g(t)}t \,dt.
\end{equation}
Next, note that by \eqref{eq13.13}, we have for $0<r<\frac 13$ that
\[
\sup_{\bar x \in B_1}\, \abs{Du(\bar x) - \vec q_{\bar x ,r}} \lesssim \psi(r)
:=  \sup_{\bar x \in B_1} \phi(\bar x, r)+\norm{Du}_{L^\infty(B_2)} \int_0^r \frac{\tilde\omega_A(t)}t \,dt+\int_0^r \frac{\tilde\omega_g(t)}t \,dt.
\]
By \eqref{eq22.25f} and \eqref{eq12.14f}, we find
\[
\sup_{\bar x \in B_1} \phi(\bar x, r) \lesssim r^\beta \norm{Du}_{L^1(B_2)}+
\tilde\omega_A(r)\, \norm{Du}_{L^\infty(B_2)} + \tilde \omega_g(r),\;\text{ where }\;\beta:=\tfrac{\ln \frac12}{\ln \kappa}>0.
\]
On the other hand, for $x, y \in B_1$ such that $\abs{x-y} <\frac12$, we have
\begin{align*}
\abs{Du(x)-Du(y)}^p &\le \abs{Du(x)- \vec q_{x,r}}^p + \abs{\vec q_{x,r}-\vec q_{y,r}}^p+\abs{Du(y)-\vec q_{y,r}}^p\\
&\le 2 \psi^p(r)+ \abs{Du(z)-\vec q_{x,r}}^p+\abs{Du(z)-\vec q_{y,r}}^p.
\end{align*}
We set $r=\abs{x-y}$, take the average over $z \in B(x,r)\cap B(y,r)$, and then take the $p$th root to get
\begin{equation}					\label{eq13.27m}
\abs{Du(x)-Du(y)} \lesssim \psi(r)+ \phi(x,r) + \phi(y,r) \lesssim \psi(r).
\end{equation}
Therefore, we get from \eqref{eq13.27m} and \eqref{eq10.23m} that
\begin{multline}				\label{eq14.00}
\abs{Du(x)-Du(y)} \lesssim \norm{Du}_{L^1(B_4)} \,\abs{x-y}^\beta\\
+\left(\norm{Du}_{L^1(B_4)} + \int_0^{1} \frac{\tilde\omega_g(t)}t \,dt\right) \int_0^{\abs{x-y}} \frac{\tilde \omega_A(t)}t \,dt + \int_0^{\abs{x-y}} \frac{\tilde \omega_g(t)}t \,dt,
\end{multline}
where we also used the fact that $\tilde \omega_{\bullet}$ satisfies the hypothesis of Lemma~\ref{lem03-dini}.
The theorem is proved.

\subsection{Proof of Theorem~\ref{thm-main-nd}}
The proof of theorem is parallel to that of Theorem~\ref{thm-main-d}.
First, we present a lemma that plays the role of Lemma~\ref{lem02-weak11}.
\begin{lemma}			\label{lem-weak11-nd}
Let $B=B(0,1)$.
Let $\cL_0$ be an elliptic operator with constant coefficients.
For $f \in L^2(B)$ let $u \in W^{2,2}_0(B)$ be a unique solution to
\[
\cL_0 u= f\;\mbox{ in }\; B;\quad
u=0 \;\mbox{ on } \; \partial B.
\]
Then for any $\alpha>0$, we have
\[
\Abs{\set{x \in B : \abs{D^2 u(x)} > \alpha}}  \le \frac{C}{\alpha} \int_{B} \abs{f},
\]
where $C=C(d, \lambda, \Lambda)$.
\end{lemma}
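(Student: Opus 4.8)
The plan is to follow the scheme of Lemma~\ref{lem02-weak11}, with the divergence structure replaced by a duality argument built on \emph{adjoint solutions} in the sense of Definition~\ref{def-adj}. First I would record that $T\colon f\mapsto D^2u$ is bounded on $L^2(B)$: this is the classical $W^{2,2}$ Dirichlet estimate $\norm{D^2u}_{L^2(B)}\lesssim\norm{f}_{L^2(B)}$ for the constant-coefficient operator $\cL_0=\dv(A_0D\,\cdot\,)$ in the smooth domain $B$, which also yields uniqueness. It then suffices to verify the hypothesis of Lemma~\ref{lem01-stein} with $c=2$.

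So I would fix $\bar y\in B$ and $0<r<\tfrac12$, take $b\in L^2(B)$ supported in $B(\bar y,r)\cap B$ with $\int_B b=0$, and let $u\in W^{2,2}(B)\cap W^{1,2}_0(B)$ solve $\cL_0u=b$ in $B$, $u=0$ on $\partial B$. Given $R\ge 2r$ with $B\setminus B(\bar y,R)\neq\emptyset$ and a matrix field $\mathbf g=(g^{kl})\in C^\infty_c\bigl((B(\bar y,2R)\setminus B(\bar y,R))\cap B\bigr)$, I would introduce $w\in L^2(B)$, the adjoint solution in the sense of Definition~\ref{def-adj} of $\cL_0^{*}w=\dv^2\mathbf g$ in $B$ with $w=\frac{\mathbf g\nu\cdot\nu}{A_0\nu\cdot\nu}$ on $\partial B$; this boundary datum vanishes because $\supp\mathbf g$ is compactly contained in $B$. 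Since $\cL_0$ has constant, symmetric coefficients, one checks that $w$ is nothing but the $W^{1,2}_0$-weak solution of $\dv(A_0Dw)=\dv^2\mathbf g$, and in particular $\norm{w}_{L^2(B)}\lesssim\norm{\mathbf g}_{L^2(B)}$ by duality from the $W^{2,2}$ solvability of $\cL_0$. Using $v=u$ in the defining identity \eqref{16.33f} together with $\cL_0u=b$, I get
\[
\int_B\tr(\mathbf g\,D^2u)=\int_B w\,\cL_0u=\int_B w\,b=\int_{B(\bar y,r)\cap B}\bigl(w-\bar w_{B(\bar y,r)\cap B}\bigr)\,b,
\]
the last step using $\int_B b=0$.

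Next I would estimate the right-hand side exactly as in Lemma~\ref{lem02-weak11}. Because $\mathbf g$ vanishes on $B(\bar y,R)$, we have $\cL_0^{*}w=0$ in $B(\bar y,R)\cap B$, so $w$ is smooth there by hypoellipticity of the constant-coefficient operator, and since $w=0$ on $\partial B$, interior and boundary derivative estimates (with $r\le R/2$) give $\norm{Dw}_{L^\infty(B(\bar y,r)\cap B)}\lesssim R^{-1-\frac d2}\norm{w}_{L^2(B(\bar y,R)\cap B)}\lesssim R^{-1-\frac d2}\norm{\mathbf g}_{L^2(B)}$. As $B(\bar y,r)\cap B$ is convex with diameter at most $2r$, this yields $\norm{w-\bar w_{B(\bar y,r)\cap B}}_{L^\infty(B(\bar y,r)\cap B)}\lesssim rR^{-1-\frac d2}\norm{\mathbf g}_{L^2(B)}$, and therefore
\[
\Bigabs{\int_{(B(\bar y,2R)\setminus B(\bar y,R))\cap B}\tr(\mathbf g\,D^2u)}\lesssim rR^{-1-\frac d2}\,\norm{b}_{L^1(B(\bar y,r)\cap B)}\,\norm{\mathbf g}_{L^2(B)}.
\]
Taking the supremum over such $\mathbf g$ and then using the Cauchy--Schwarz inequality over the annulus (of measure $\lesssim R^d$) gives the analogue of \eqref{eq4.37},
\[
\norm{D^2u}_{L^1((B(\bar y,2R)\setminus B(\bar y,R))\cap B)}\lesssim \frac rR\,\norm{b}_{L^1(B(\bar y,r)\cap B)},
\]
and summing over $R=2r,4r,\dots$ up to $\sim1$ produces a geometric series, so that $\int_{B\setminus B(\bar y,2r)}\abs{D^2u}\lesssim\norm{b}_{L^1(B(\bar y,r)\cap B)}$; this is the hypothesis of Lemma~\ref{lem01-stein}, and the weak type-$(1,1)$ bound follows.

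The main obstacle, relative to the divergence-form Lemma~\ref{lem02-weak11}, is the first move in the duality: the object paired with $D^2u$ is a matrix field $\mathbf g$, and integrating by parts would throw two derivatives onto $\mathbf g$ and destroy the $L^2$ bound. The correct device is to test against the adjoint solution $w$, so that $\int_B\tr(\mathbf g\,D^2u)=\int_B w\,b$. The point needing care is then to know that $w$, a priori only in $L^2(B)$, is regular enough on $B(\bar y,R)\cap B$ to support the pointwise gradient estimate; this is harmless here because $\cL_0$ has constant coefficients and $\mathbf g$ vanishes both on $B(\bar y,R)$ and near $\partial B$, so $w$ is a classical solution of a homogeneous equation there, vanishing on the relevant portion of $\partial B$.
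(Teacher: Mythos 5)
Your proposal is correct and follows essentially the same route as the paper's: define $T\colon f\mapsto D^2u$, reduce to the H\"ormander-type hypothesis of Lemma~\ref{lem01-stein}, pair $D^2u$ against the adjoint solution $w$ of $\cL_0^*w=\dv^2\mathbf g$ via the defining identity \eqref{16.33f} to get $\int_B\tr(\mathbf g\,D^2u)=\int_Bwb=\int b(w-\bar w)$, and then use interior/boundary gradient estimates for the homogeneous constant-coefficient adjoint equation on $B(\bar y,R)\cap B$ together with the $L^2$ bound $\norm{w}_{L^2(B)}\lesssim\norm{\mathbf g}_{L^2(B)}$ (the paper invokes \cite[Lemma~2]{EM2016} for this, whereas you derive it by duality from $W^{2,2}$ solvability — same content). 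The only cosmetic differences are that you spell out why the boundary datum $\tfrac{\mathbf g\nu\cdot\nu}{A_0\nu\cdot\nu}$ vanishes for compactly supported $\mathbf g$ and add a remark identifying $w$ with a $W^{1,2}_0$-weak solution, neither of which changes the argument.
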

\begin{proof}
The proof is a modification of Lemma~\ref{lem02-weak11}.
Since the map $T: f \mapsto D^2 u$ is a bounded linear operator on $L^2(B)$, it suffices to show that $T$ satisfies the hypothesis of Lemma~\ref{lem01-stein}.
We again take $c=2$.
Fix $\bar y \in B$, $0<r<\frac12$, and let $b \in L^2(B)$ be supported in $B(\bar y, r) \cap B$ with mean zero.
Let $u \in W^{2,2}_0(B)$ be a unique solution of
\[
\cL_0 u= b\;\mbox{ in }\; B;\quad
u=0 \;\mbox{ on } \; \partial B.
\]
For any $R\ge cr$ such that $B\setminus B(\bar y, R) \neq \emptyset$ and $\mathbf{g}=(g^{kl})_{k,l=1}^d \in C^\infty_c((B(\bar y,2R)\setminus B(\bar y,R))\cap B)$, let $v \in L^2(B)$ be a unique adjoint solution (see \cite[Lemma~2]{EM2016}) of
\[
\cL_0^* v=  \dv^2 \mathbf{g}\;\mbox{ in }\; B;\quad
v=0 \;\mbox{ on } \; \partial B,
\]
By Definition~\ref{def-adj}, we have the identity
\begin{equation}				\label{eq22.44f}
\int_{B} \tr (D^2 u \, \mathbf{g}) = \int_{B}  v b  = \int_{B(\bar y ,r)\cap B}  b \left(v-\overline{v}_{B(\bar y ,r)\cap B}\right).	
\end{equation}
Since $\cL_0^*$ is an elliptic operator with constant coefficients, $v$ is a classical solution; see \cite{EM2016}.
Since $\mathbf{g}= 0$ in $B(\bar y, R) \cap B$ and $r\le R/2$, the standard interior and boundary estimates yield
\[
\norm{Dv}_{L^\infty(B(\bar y, r) \cap B)}
\lesssim R^{-1-\frac{d}2} \norm{v}_{L^2(B(\bar y, R)\cap B)} \lesssim R^{-1-\frac{d}2} \norm{v}_{L^2(B)} \lesssim R^{-1-\frac{d}2} \norm{\mathbf{g}}_{L^2(B)},
\]
where we used \cite[Lemma~2]{EM2016} in the last step.
Therefore, we have
\begin{align*}
\Abs{\int_{(B(\bar y,2R)\setminus B(\bar y,R))\cap B}  \tr (D^2 u \, \mathbf{g}) }
&\lesssim r \norm{b}_{L^1(B(\bar y, r) \cap B)} \norm{Dv}_{L^\infty(B(\bar y, r)\cap B)} \\
&\lesssim r  R^{-1-\frac{d}2}\,\norm{b}_{L^1(B(\bar y, r)\cap B)}  \norm{\mathbf g}_{L^2((B(\bar y,2R)\setminus B(\bar y,R))\cap B)}.
\end{align*}
The rest of proof is identical to that of Lemma~\ref{lem02-weak11} and omitted.
\end{proof}

For $\bar x \in B_3$ and $0<r<\frac13$, we decompose $u=v+w$, where $w \in W^{2,2}_0(B(\bar x, r))$ is a unique solution of the problem
\[
\cL_0 w = -\tr ((A- \bar A) D^2 u) + g-\bar g \;\mbox{ in }\;B(\bar x, r);\quad
w=0 \;\mbox{ on }\;\partial B(\bar x, r).
\]
By Lemma~\ref{lem-weak11-nd} with scaling, we have for any $\alpha>0$,
\[
\abs{\set{x\in B(\bar x, r): \abs{D^2w(x)} > \alpha}} \lesssim \frac{1}{\alpha}\left(\norm{D^2 u}_{L^\infty(B(\bar x, r))} \int_{B(\bar x, r)} \abs{A-\bar A} +  \int_{B(\bar x, r)}  \abs{g -\bar g}\right).
\]
Therefore, for any $0<p<1$, we have
\[
\left(\fint_{B(\bar x, r)} \abs{D^2w}^p \right)^{\frac1p} \lesssim \omega_A(r) \,\norm{D^2 u}_{L^\infty(B(\bar x, r))} +  \omega_g(r).
\]
Since $v=u-w$ satisfies $\cL_0 v =\bar g$ in $B(\bar x, r)$,
we observe that for any $\mathbf q\in \mathrm{Sym}(d)$, the set of all $d\times d$ symmetric matrices, we have
\[
\cL_0 (D^2 v-\mathbf q) =0 \;\mbox{ in }\;B(\bar x, r).
\]
Since $\cL_0$ is an operator with constant coefficients, similar to \eqref{int-reg}, we have
\[
\norm{D^3 v}_{L^\infty(B(\bar x, \frac12 r))} \le  C_0r^{-1} \left(\fint_{B(\bar x, r)} \abs{D^2 v - \mathbf q}^p \right)^{\frac1p}
\]
and thus, similar to \eqref{eq15.50b}, we obtain (recall $0<\kappa<\frac12$)
\[
\left(\fint_{B(\bar x, \kappa r)} \abs{D^2 v - \overline{D^2 v}_{B(\bar x, \kappa r)}}^p \right)^{\frac1p} \le 2\kappa r \norm{D^3 v}_{L^\infty(B(\bar x, \frac12 r))} \le 2C_0 \kappa \left(\fint_{B(\bar x, r)} \abs{D^2 v -\mathbf q}^p \right)^{\frac1p}.
\]
If we set
\[
\phi(\bar x,r):=\inf_{\mathbf q\in \mathrm{Sym}(d)}\left( \fint_{B(\bar x,r)} \abs{D^2u -\mathbf q}^p \right)^{\frac1p},
\]
then by the same argument that led to \eqref{eq22.25f}, we get
\[
\phi(\bar x, \kappa^j r) \le 2^{-j} \phi(\bar x, r) +C \norm{D^2 u}_{L^\infty(B(\bar x, r))}\,\tilde \omega_A(\kappa^{j} r) + C \tilde \omega_g(\kappa^{j} r).
\]
Now, by repeating the same line of proof of Theorem~\ref{thm-main-d}, we reach the following estimate:
For $x, y \in B(0,1)$ such that $\abs{x-y} <\frac12$, we have
\begin{multline}				\label{eq14.00nd}
\abs{D^2 u(x)-D^2u(y)} \lesssim \norm{D^2 u}_{L^1(B_4)} \,\abs{x-y}^\beta\\
+\left(\norm{D^2 u}_{L^1(B_4)} + \int_0^{1} \frac{\tilde\omega_g(t)}t \,dt\right) \int_0^{\abs{x-y}} \frac{\tilde \omega_A(t)}t \,dt + \int_0^{\abs{x-y}} \frac{\tilde \omega_g(t)}t \,dt.
\end{multline}
The theorem is proved.

\subsection{Proof of Theorem~\ref{thm-main-adj}}
The proof of theorem is similar to that of Theorem~\ref{thm-main-nd}.
We begin with a lemma that is an adjoint version of Lemma~\ref{lem-weak11-nd}.
\begin{lemma}			\label{lem-weak11-adj}
Let $B=B(0,1)$.
Let $\cL_0$ be an elliptic operator with constant coefficients.
For $\mathbf{f}=(f^{kl})_{k,l=1}^d \in L^2(B)$, let $u \in L^2(B)$ be a unique solution to the adjoint problem
\[
\cL_0^* u= \dv^2\mathbf{f}\;\mbox{ in }\; B;\quad
u=\frac{ \mathbf{f} \nu \cdot \nu}{A_0 \nu \cdot  \nu} \;\mbox{ on } \; \partial B.
\]
Then for any $\alpha>0$, we have
\[
\Abs{\set{x \in B : \abs{u(x)} > \alpha}}  \le \frac{C}{\alpha} \int_{B} \abs{\mathbf{f}},
\]
where $C=C(d, \lambda, \Lambda)$.
\end{lemma}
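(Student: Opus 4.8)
The plan is to verify that the operator $T\colon\mathbf f\mapsto u$, which is bounded on $L^2(B)$ by \cite[Lemma~2]{EM2016}, satisfies the cancellation hypothesis of Lemma~\ref{lem01-stein}; the weak type-$(1,1)$ bound for $u$ then follows at once. The scheme parallels Lemmas~\ref{lem02-weak11} and \ref{lem-weak11-nd}, with the equation and its adjoint interchanged. So I would fix $\bar y\in B$, $0<r<\frac12$, let $\mathbf b=(b^{kl})\in L^2(B)$ be supported in $B(\bar y,r)\cap B$ with $\int_B\mathbf b=0$, let $u$ be the adjoint solution of $\cL_0^*u=\dv^2\mathbf b$ with the prescribed boundary condition, and take $c=4$ in Lemma~\ref{lem01-stein}.

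Next I would decompose $B\setminus B(\bar y,cr)$ into the dyadic pieces $A_R:=(B(\bar y,2R)\setminus B(\bar y,R))\cap B$ for $R=cr,2cr,\dots,2^{N-1}cr$, where $N$ is minimal with $B\subset B(\bar y,2^Ncr)$, and estimate each $\int_{A_R}\abs u$. By $L^1$--$L^\infty$ duality this equals $\sup\int_B u\,h$ over $h\in L^\infty(B)$ with $\abs h\le 1$ and $\supp h\subset A_R$. For such an $h$, let $v\in W^{2,2}(B)\cap W^{1,2}_0(B)$ solve $\cL_0 v=h$ in $B$ with $v=0$ on $\partial B$. Since $v$ is admissible in Definition~\ref{def-adj} and $\int_B\mathbf b=0$,
\[
\int_B u\,h=\int_B u\,\cL_0 v=\int_B\tr(\mathbf b\,D^2 v)=\int_{B(\bar y,r)\cap B}\tr\!\left(\mathbf b\,\bigl(D^2 v-\overline{D^2 v}_{B(\bar y,r)\cap B}\bigr)\right),
\]
and, as $B(\bar y,r)\cap B$ is convex of diameter at most $2r$,
\[
\Abs{\int_B u\,h}\lesssim r\,\norm{\mathbf b}_{L^1(B(\bar y,r)\cap B)}\,\norm{D^3 v}_{L^\infty(B(\bar y,r)\cap B)}.
\]

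The crux is the bound $\norm{D^3 v}_{L^\infty(B(\bar y,r)\cap B)}\lesssim R^{-1}$; note $\cL_0 v=0$ on $B(\bar y,R)\cap B$ and $v=0$ on $\partial B\cap B(\bar y,R)$. When $\dist(\bar y,\partial B)\gtrsim R$ this is interior: $D^2 v$ solves $\cL_0(D^2 v)=0$ on the ball $B(\bar y,R)$, so an interior derivative estimate together with $\norm{D^2 v}_{L^2(B)}\le\norm v_{W^{2,2}(B)}\lesssim\norm h_{L^2(B)}\lesssim R^{d/2}$ gives $\norm{D^3 v}_{L^\infty(B(\bar y,r)\cap B)}\lesssim R^{-1-d/2}\cdot R^{d/2}=R^{-1}$. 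When $\dist(\bar y,\partial B)\lesssim R$, a boundary derivative estimate (valid since $r\le R/4$ and $v=0$ on $\partial B\cap B(\bar y,R)$) gives $\norm{D^3 v}_{L^\infty(B(\bar y,r)\cap B)}\lesssim R^{-3}\norm v_{L^\infty(B(\bar y,R/2)\cap B)}$, and here $v$ is small: writing $v(x)=\int_B G(x,y)h(y)\,dy$ with $G$ the Green function of $\cL_0$ in $B$, the source $h$ is bounded by $1$, of measure $\lesssim R^d$, at distance $\ge R/2$ from any $x\in B(\bar y,R/2)\cap B$, and within $O(R)$ of $\partial B$, so the pointwise bound $G(x,y)\lesssim\abs{x-y}^{2-d}$ for $d\ge3$ (and its refinement $G(x,y)\lesssim\log(1+\dist(x,\partial B)\dist(y,\partial B)\abs{x-y}^{-2})$ for $d=2$) yields $\norm v_{L^\infty(B(\bar y,R/2)\cap B)}\lesssim R^{2-d}\cdot R^d=R^2$, hence again $\norm{D^3 v}_{L^\infty(B(\bar y,r)\cap B)}\lesssim R^{-1}$. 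Either way $\int_{A_R}\abs u\lesssim rR^{-1}\norm{\mathbf b}_{L^1(B(\bar y,r)\cap B)}$; summing over $R=cr,\dots,2^{N-1}cr$ gives $\int_{B\setminus B(\bar y,cr)}\abs u\lesssim\norm{\mathbf b}_{L^1(B(\bar y,r)\cap B)}$, which is the hypothesis of Lemma~\ref{lem01-stein}, and an appeal to that lemma finishes the proof.

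The step I expect to be the main obstacle is precisely this factor-$R^2$ gain. In Lemmas~\ref{lem02-weak11} and \ref{lem-weak11-nd} the test object is itself an (adjoint) solution—an operator of order zero—so a single $L^\infty$--$L^2$ derivative estimate suffices; here $v$ solves $\cL_0 v=h$ and is two derivatives smoother than $h$, so three derivatives of $v$ enter, and the naive bound $\norm v_{L^2(B)}\lesssim\norm h_{L^2(B)}$ only yields $\norm{D^3 v}_{L^\infty}\lesssim R^{-3}$, whose dyadic sum diverges. The deficit must be recovered from the decay of $v$ away from $\supp h$ via pointwise Green function bounds, with the $d=2$ near-boundary case needing the sharper of these. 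A secondary, routine point is the uniformity in $\bar y$ and $R$ of the interior and boundary estimates on the lens-shaped domains $B(\bar y,R)\cap B$, which follows from the smoothness of $\partial B$.
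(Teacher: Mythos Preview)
Your approach is essentially the paper's: verify the cancellation hypothesis of Lemma~\ref{lem01-stein} by testing $u$ against the solution $v$ of the direct problem $\cL_0 v=g$ (you use $h\in L^\infty$ and $L^1$--$L^\infty$ duality, the paper uses $g\in C_c^\infty$ and $L^2$--$L^2$ duality followed by H\"older; this is cosmetic), subtract the mean of $D^2v$ to exploit $\int\mathbf b=0$, and sum the resulting $rR^{-1}$ over dyadic shells.

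Where you diverge is in what you flag as the main obstacle. For the near-boundary case you drop the argument that worked in the interior and instead chase an $R^2$ gain on $\|v\|_{L^\infty}$ via pointwise Green function bounds; this is correct but unnecessary. The paper treats both cases uniformly: the one-derivative gain
\[
\|D^3 v\|_{L^\infty(B(\bar y,r)\cap B)}\lesssim R^{-1-d/2}\|D^2 v\|_{L^2(B(\bar y,R)\cap B)}
\]
is a standard interior \emph{and} boundary estimate for constant-coefficient operators on the smooth domain $B$ (using $\cL_0 v=0$ in $B(\bar y,R)\cap B$ and $v=0$ on $\partial B$), and the global Calder\'on--Zygmund bound $\|D^2 v\|_{L^2(B)}\lesssim\|g\|_{L^2(B)}$ then closes the chain with no Green functions and no case split. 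Your own interior computation already contained exactly this; it carries over verbatim to the boundary.
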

\begin{proof}
By \cite[Lemma~2]{EM2016}, the map $T: \mathbf{f} \mapsto u$ is a bounded linear operator on $L^2(B)$.
We show that $T$ satisfies the hypothesis of Lemma~\ref{lem01-stein}.
As before, we take $c=2$.
For $\bar y \in B$ and $0<r<\frac12$, let $\mathbf{b}=(b^{kl})_{k,l=1}^d$ be a  matrix of $L^2(B)$ functions supported in $B(\bar y, r) \cap B$ with mean zero.
By \cite[Lemma~2]{EM2016}, there exists a unique adjoint solution  $u \in L^2(B)$ of the problem
\[
\cL_0^* u= \dv^2 \mathbf{b}\;\mbox{ in }\; B;\quad
u=\frac{ \mathbf{b} \nu \cdot \nu}{A_0 \nu \cdot  \nu}\;\mbox{ on } \; \partial B.
\]
For any $R\ge cr$ such that $B\setminus B(\bar y, R) \neq \emptyset$ and $g \in C^\infty_c((B(\bar y,2R)\setminus B(\bar y,R))\cap B)$, let  $v \in W^{2,2}_0(B)$ be a unique solution of
\[
\cL_0 v=  g\;\mbox{ in }\; B;\quad
v=0 \;\mbox{ on } \; \partial B,
\]
By Definition~\ref{def-adj}, we have the identity
\begin{equation}				\label{eq0911}
\int_{B} u \, g = \int_{B}  \tr(\mathbf b D^2v)= \int_{B(\bar y ,r)\cap B} \tr (\mathbf{b} (D^2 v-\mathbf q)),\quad\mbox{where }\; \mathbf q=\overline{D^2 v}_{B(\bar y ,r)\cap B}.
\end{equation}
Since $g = 0$ in $B(\bar y, R) \cap B$, $r\le R/c=R/2$, and $\cL_0$ has constant coefficients, the standard interior and boundary estimates and Calder\'on-Zygmund estimate yield
\[
\norm{D^3 v}_{L^\infty(B(\bar y, r)\cap B)} \lesssim R^{-1-\frac{d}2} \norm{D^2 v}_{L^2(B(\bar y, R)\cap B)} \lesssim R^{-1-\frac{d}2} \norm{D^2 v}_{L^2(B)} \lesssim R^{-1-\frac{d}2} \norm{g}_{L^2(B)}.
\]
Therefore, we have
\[
\Abs{\int_{(B(\bar y,2R)\setminus B(\bar y,R))\cap B} u\, g\,}
\lesssim r  R^{-1-\frac{d}2}\,\norm{\mathbf b}_{L^1(B(\bar y,r) \cap B)}\, \norm{g}_{L^2((B(\bar y,2R)\setminus B(\bar y,R))\cap B)}.
\]
The rest of proof is identical to that of Lemma~\ref{lem02-weak11} and omitted.
\end{proof}

For $\bar x \in B_3$ and $0<r<\frac13$, we decompose $u=v+w$, where $w \in L^2(B(\bar x, r))$ is a unique solution of the problem (see \cite[Lemma~2]{EM2016})
\[
\left\{
\begin{aligned}
\cL_0^* w &= -\dv^2 ((A- \bar A) u) + \dv^2 (\mathbf{g}-\bar{\mathbf g}) \;\mbox{ in }\;B(\bar x, r),\\
w &=\frac{\left(\mathbf{g}-\bar{\mathbf g}-(A-\bar A)u\right) \nu \cdot \nu}{\bar A \nu \cdot  \nu}\;\mbox{ on }\;\partial B(\bar x, r).
\end{aligned}
\right.
\]
By Lemma~\ref{lem-weak11-adj} with scaling, we have
\[
\abs{\set{x\in B(\bar x, r): \abs{w(x)} > \alpha}} \lesssim \frac{1}{\alpha}\left(\norm{u}_{L^\infty(B(\bar x, r))} \int_{B(\bar x, r)} \abs{A-\bar A} +  \int_{B(\bar x, r)}  \abs{\mathbf g -\bar{\mathbf g}}\,\right).
\]
Therefore, for any $0<p<1$, we have
\[
\left(\fint_{B(\bar x, r)} \abs{w}^p \right)^{\frac1p} \lesssim \omega_A(r) \,\norm{u}_{L^\infty(B(\bar x, r))} +  \omega_g(r).
\]
Note that $v$ is a local adjoint solution of  $\cL_0^* v = 0$ in $B(\bar x, r)$ and so is $v-q$ for any constant $q \in \bR$.
Since $\cL_0^*$ is an operator with constant coefficients, $v-q$ is a classical solution; see \cite{EM2016}.
Therefore, by the standard interior estimate, we have
\[
\norm{D v}_{L^\infty(B(\bar x, \frac12 r))} \le C_0 r^{-1} \left(\fint_{B(\bar x, r)} \abs{v-q}^p \right)^{\frac1p}
\]
and thus, similar to \eqref{eq15.50b}, we obtain
\[
\left(\fint_{B(\bar x, \kappa r)} \abs{v - \overline{v}_{B(\bar x, \kappa r)}}^p \right)^{\frac1p} \le 2\kappa r \norm{D v}_{L^\infty(B(\bar x, \frac12 r))} \le 2C_0 \kappa \left(\fint_{B(\bar x, r)} \abs{v - q}^p \right)^{\frac1p}.
\]
If we set
\[
\phi(\bar x,r):=\inf_{q\in \bR}\left( \fint_{B(\bar x,r)} \abs{u - q}^p \,\right)^{\frac1p},
\]
then by the same argument that led to \eqref{eq22.25f}, we get
\[
\phi(\bar x, \kappa^j r) \le 2^{-j} \phi(\bar x, r) +C \norm{u}_{L^\infty(B(\bar x, r))}\,\tilde \omega_A(\kappa^{j} r) + C \tilde \omega_g(\kappa^{j} r).
\]
Now, by repeating the same line of proof of Theorem~\ref{thm-main-d}, we reach the following estimate:
For $x, y \in B(0,1)$ such that $\abs{x-y} <\frac12$, we have
\begin{multline}				\label{eq14.00adj}
\abs{u(x)-u(y)} \lesssim \norm{u}_{L^1(B_4)} \,\abs{x-y}^\beta\\
+\left(\norm{u}_{L^1(B_4)} + \int_0^{1} \frac{\tilde\omega_g(t)}t \,dt\right) \int_0^{\abs{x-y}} \frac{\tilde \omega_A(t)}t \,dt + \int_0^{\abs{x-y}} \frac{\tilde \omega_g(t)}t \,dt. \qedhere
\end{multline}
The theorem is proved.

\section{Weak type-(1,1) estimates}				\label{sec3}
In this section, we present a condition for the coefficients $A$ to guarantee local weak type-(1,1) estimates that have been established for constants coefficients operators.
It turns out that it is sufficient that the mean oscillation $\omega_A$ satisfies the condition
\begin{equation}					\label{eq15.08m}
\omega_A(r) \lesssim ( \ln r )^{-2}, \quad\forall r\in(0,\tfrac12).
\end{equation}
In \cite{Es94}, Escauriaza constructed an elliptic operator with $\omega_A(r) \lesssim ( \ln r )^{-1}$ such that a weak type-$(1,1)$ estimate does not hold with respect to the Lebesgue measure. By modifying the counterexample in \cite{Es94}, it is easy to find elliptic operators such that
\[
\omega_A(r) \lesssim ( \ln(-\ln r) )^{-1}( \ln r )^{-1}
\quad \text{or}\quad
\omega_A(r) \lesssim ( \ln\ln(-\ln r) )^{-1}( \ln(-\ln r) )^{-1}( \ln r )^{-1}.
\]
But none of them are Dini functions, so there is a gap between \eqref{eq15.08m} and these counterexamples.

After we finished writing this paper, Luis Escauriaza \cite{E16} kindly informed us that for non-divergence form elliptic equations, a weak type-$(1,1)$ holds when the coefficients are Dini continuous. However, such result has not been explicitly written in the literature.
In a subsequent paper, we will further study weak type-$(1,1)$ estimates for non-divergence form equations with Dini mean oscillation coefficients.

Unlike Theorems \ref{thm-main-d}, \ref{thm-main-nd}, and \ref{thm-main-adj}, we do not claim that the theorems in this section can be readily extended to systems, except for Theorem \ref{thm-weak11-d}.
The reason is that we use the $W^{1,p}$ solvability for elliptic equations in divergence form with VMO (in fact, Dini mean oscillation) coefficients, which is also available for elliptic systems satisfying the strong ellipticity condition.
See, for instance, \cite[Sec. 11.1]{DK11}. However, for elliptic systems in non-divergence form, the solvability only holds for $\mathcal L-\lambda I$ with a large $\lambda$.

\begin{theorem}				\label{thm-weak11-d}
Let $B=B(0,1)$.
Suppose the coefficients $A(x)$ of the divergent operator $L$ satisfies the condition \eqref{eq15.08m}.
For $\vec f \in L^2(B)$ supported on $B(0,\frac13)$, let $u \in W^{1,2}_0(B)$ be a unique weak solution to
\[
L u= \dv \vec f\;\mbox{ in }\; B;\quad
u=0 \;\mbox{ on } \; \partial B.
\]
Then for any $\alpha>0$, we have
\[
\Abs{\set{x \in B : \abs{Du(x)} > \alpha}}  \le \frac{C}{\alpha} \int_{B} \abs{\vec f},
\]
where $C=C(d, \lambda, \Lambda, \omega_A)$.
\end{theorem}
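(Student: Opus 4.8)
The plan is to verify the hypothesis of Lemma~\ref{lem01-stein} for the solution operator $T\colon\vec f\mapsto Du$, where $u\in W^{1,2}_0(B)$ solves $Lu=\dv\vec f$ in $B$ and $u=0$ on $\partial B$. Since $T$ is bounded on $L^2(B)$ by the Lax--Milgram lemma, everything reduces to the cancellation estimate
\[
\int_{B\setminus B(\bar y,cr)}\abs{Du}\le C\int_{B(\bar y,r)\cap B}\abs{\vec b}
\]
for mean-zero $\vec b\in L^2(B)$ supported in $B(\bar y,r)\cap B$, $\bar y\in B$, $0<r<\tfrac12$, with $c>1$ chosen large. Note that \eqref{eq15.08m} forces $A$ (hence $A^\top$) to have Dini mean oscillation, so the whole apparatus of Section~\ref{sec2}---in particular the modulus-of-continuity estimate \eqref{eq14.00}---applies both to solutions of $Lu=0$ and, via $L^*$, to solutions of $L^*v=0$; the hypothesis $\supp\vec f\subset B(0,\tfrac13)$ will only be used to localize, so that all mean oscillations appearing below are dominated by $\omega_A$.

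The argument is a duality in the spirit of Lemma~\ref{lem02-weak11}, with the $C^1$ regularity of adjoint solutions playing the role of the constant-coefficient Schauder bounds. Fix a dyadic radius $R=2^kcr\ge cr$ with $B\setminus B(\bar y,R)\ne\emptyset$, set $\Sigma_R=(B(\bar y,2R)\setminus B(\bar y,R))\cap B$, and take $\vec g\in C^\infty_c(\Sigma_R)$. Let $v\in W^{1,2}_0(B)$ solve $L^*v=\dv\vec g$ in $B$, $v=0$ on $\partial B$. Testing the two equations against each other and using $\int_B\vec b=0$ gives, for every $\vec q\in\bR^d$,
\[
\int_{\Sigma_R}Du\cdot\vec g=\int_{B(\bar y,r)\cap B}\vec b\cdot Dv=\int_{B(\bar y,r)\cap B}\vec b\cdot(Dv-\vec q),
\]
whence $\Abs{\int_{\Sigma_R}Du\cdot\vec g}\le\norm{\vec b}_{L^1(B(\bar y,r)\cap B)}\inf_{\vec q}\norm{Dv-\vec q}_{L^\infty(B(\bar y,r)\cap B)}$. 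Since $\vec g$ vanishes on $B(\bar y,R)$, we have $L^*v=0$ in $B(\bar y,R)\cap B$; rescaling that ball to unit size (the inequality $r\le R/c$ provides the needed room once $c$ is large) and applying \eqref{eq14.00} and its boundary counterpart to $v$ there, together with $\norm{Dv}_{L^2(B)}\lesssim\norm{\vec g}_{L^2(B)}$, yields
\[
\inf_{\vec q}\norm{Dv-\vec q}_{L^\infty(B(\bar y,r)\cap B)}\lesssim\left((r/R)^\beta+\int_0^r\frac{\tilde\omega_A(t)}{t}\,dt\right)R^{-\frac d2}\norm{\vec g}_{L^2(\Sigma_R)},
\]
with $\beta$ and $\tilde\omega_A$ as in Section~\ref{sec2}. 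The crucial point is that, thanks to the $C^1$ regularity of $v$, the oscillation of $Dv$ over $B(\bar y,r)$ beats the trivial bound $R^{-d}\norm{Dv}_{L^1}$ by the factor in parentheses, which decays both as $R$ grows and as the coefficient oscillation tends to zero.

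By duality the two displays give $\norm{Du}_{L^2(\Sigma_R)}\lesssim\bigl((r/R)^\beta+\int_0^r\tilde\omega_A(t)/t\,dt\bigr)R^{-d/2}\norm{\vec b}_{L^1(B(\bar y,r)\cap B)}$, and since $\abs{\Sigma_R}\lesssim R^d$, the Cauchy--Schwarz inequality yields
\[
\norm{Du}_{L^1(\Sigma_R)}\lesssim\left((r/R)^\beta+\int_0^r\frac{\tilde\omega_A(t)}{t}\,dt\right)\norm{\vec b}_{L^1(B(\bar y,r)\cap B)}.
\]
Now let $N$ be the smallest integer with $B\subset B(\bar y,2^Ncr)$, so $N\sim\ln(1/r)$, and sum this inequality over $R=cr,2cr,\dots,2^{N-1}cr$, whose annuli $\Sigma_R$ cover $B\setminus B(\bar y,cr)$. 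The geometric terms $(r/R)^\beta$ sum to a constant, while the rest contribute $N\int_0^r\tilde\omega_A(t)/t\,dt$. Here \eqref{eq15.08m} enters decisively: it is precisely the condition guaranteeing that $\ln(1/r)\int_0^r\tilde\omega_A(t)/t\,dt$ stays bounded uniformly for $r\in(0,\tfrac12)$ (this is the borderline---for $\omega_A$ only slightly larger, say $\sim\abs{\ln r}^{-1-\varepsilon}$ with $0<\varepsilon<1$, the product is unbounded, matching the counterexamples of Section~\ref{sec3}). This gives the cancellation estimate, and Lemma~\ref{lem01-stein} finishes the proof.

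I expect the main obstacles to be twofold. First, one needs the boundary analogue of the $C^1$-with-modulus estimate \eqref{eq14.00} for the adjoint solution $v$ near $\partial B$---obtained by adapting the Campanato iteration of Section~\ref{sec2} to half-balls, using that $B$ has smooth boundary---and one must track carefully how the modulus of continuity of $Dv$ depends on $\omega_A$ once the ball $B(\bar y,R)$ is rescaled to unit size. Second, the logarithmic bookkeeping of the $\sim\ln(1/r)$ dyadic annuli is exactly what forces the quantitative hypothesis $\omega_A(r)\lesssim(\ln r)^{-2}$ rather than merely the Dini condition \eqref{eq:dcmo}, so the sharpness of the counting (against Escauriaza's example) is the delicate part.
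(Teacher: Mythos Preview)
Your approach is essentially the paper's: verify the hypothesis of Lemma~\ref{lem01-stein} by duality, replace the constant-coefficient Schauder bound of Lemma~\ref{lem02-weak11} by the $C^1$ modulus-of-continuity estimate \eqref{eq14.00} for the adjoint solution $v$ (which solves $L^*v=0$ near $\bar y$), and then sum over $N\sim\ln(1/r)$ dyadic annuli, using \eqref{eq15.08m} to make $N\int_0^r\tilde\omega_A(t)/t\,dt$ bounded. The paper records the last step as a separate calculation (Lemma~\ref{lem04-omegatilde}), showing that $\omega_A(t)\lesssim(\ln t)^{-2}$ forces $\int_0^r\tilde\omega_A(t)/t\,dt\lesssim(\ln(4/r))^{-1}$.

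One point where the paper is slightly cleaner than your sketch: you flag the boundary analogue of \eqref{eq14.00} as a main obstacle, but the paper avoids it entirely. The support hypothesis $\supp\vec f\subset B(0,\tfrac13)$ is used not merely to control mean oscillations, but---via the Calder\'on--Zygmund decomposition---to restrict attention to $\bar y\in B(0,\tfrac12)$ and $0<r<\tfrac14$. With $c=8$, every dyadic radius $R$ that actually appears satisfies $B(\bar y,\tfrac14 R)\subset B$, so $v$ is an \emph{interior} solution of $L^*v=0$ on $B(\bar y,\tfrac14 R)$ and only the interior estimate is needed. This is the real payoff of the support condition.
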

\begin{proof}
We modify the proof of Lemma~\ref{lem02-weak11}.
Without loss of generality, we may assume that $\|f\|_{L^1(B)}=1$.
Since $f$ is supported on $B(0,\frac13)$, by virtue of the Calder\'on-Zygmund decomposition argument in Stein \cite{Stein93}, for sufficiently large $\alpha$, we only need to check the hypothesis of Lemma \ref{lem01-stein} for $\bar y\in B(0,\frac12)$ and $0<r<\frac14$.
We set $c=8$ and solve for $u$ as in the proof of Lemma~\ref{lem02-weak11}.
We replicate the same proof there with $L_0$ replaced by $L$ up to \eqref{14.23m}.
Since $L^*$ is not a constant coefficients operator, we cannot get \eqref{14.18m}.
Instead, by a similar argument that led to \eqref{eq14.00}, we get
\[
\abs{Dv(x)-Dv(y)}  \lesssim\left(\left(\frac{\abs{x-y}}{R}\right)^\beta + \int_0^{\abs{x-y}} \frac{\tilde\omega_A(t)}t \,dt \right) R^{-\frac{d}2} \norm{Dv}_{L^2(B(\bar y, \frac14 R))}
\]
for $x, y \in B(\bar y, \frac18 R)$ since $L^* v =0$ in $B(\bar y, \frac14 R) \subset B$.
Since $r\le R/c=R/8$, we thus have
\begin{align}
					\nonumber
\norm{Dv -\overline{Dv}_{B(\bar y, r)}}_{L^\infty(B(\bar y, r))} &\lesssim R^{-\frac{d}2} \norm{Dv}_{L^2(B)} \left( r^\beta R^{-\beta} + \int_0^{r} \frac{\tilde \omega_A(t)}t\,dt \right)\\
					\label{eq15.51w}
& \lesssim R^{-\frac{d}2}  \norm{Dv}_{L^2(B)}\, \left(r^\beta R^{-\beta} +\{\ln (4/r)\}^{-1}\right),
\end{align}
where we used the following lemma.
\begin{lemma}				\label{lem04-omegatilde}
Let $\omega$ be a nonnegative function such that $\omega(t) \le  \bigl(\ln \frac{t} 4\bigr)^{-2}$ for $0<t\le 1$, and $\tilde \omega$ be given as in \eqref{eq10.54}.
Then
\[
\int_0^r \frac{\tilde \omega(t)}t \,dt \lesssim  \left(\ln \frac4r \right)^{-1},\quad \forall r\in (0, 1].
\]
\end{lemma}
\begin{proof}
By definition of $\tilde \omega(t)$, we have
\[
\tilde\omega(t) \le \tilde \omega_1(t)+ \tilde \omega_2(t)
:= \sum_{j=1}^\infty \frac{[\kappa^{-j}t  \le 1]}{ 2^j (\ln (\kappa^{-j}t/4))^2}+ \frac1{(\ln 4)^2} \sum_{j=1}^\infty 2^{-j}[\kappa^{-j}t >1].
\]
We claim that
\begin{equation}					\label{eq15.37w}
\tilde \omega(t) \lesssim \left(\ln \tfrac{t}4 \right)^{-2},\quad \forall t\in (0, 1].
\end{equation}
Indeed, for $0<t <\frac12$, let $N$ be the largest integer satisfying $\kappa^{-N}t \le 1$.
We set $\gamma := \ln \kappa/ \ln \frac{t}4>0$.
Note that $\gamma N <1$ and we have
\begin{align}
					\nonumber
\left(\ln \tfrac{t}4\right)^2 \,\tilde \omega_1(t) =\sum_{j=1}^N \frac{1}{2^j (1-\gamma j)^2} &\le \sum_{j=1}^l \frac{1}{2^j (1-\gamma j)^2}+ \sum_{j=l+1}^N \frac{1}{2^j (1-\gamma N)^2} \\
					\label{eq15.30w}
& \le \frac{1}{(1-\gamma l)^2} + \frac{(\ln \frac{t}4)^2}{2^{l} (\ln 4)^2},
\end{align}
where $1<l<N$ is any integer.
If we take the integer $l$ so that  $l \sim 1/2\gamma$, then it is easy to verify that \eqref{eq15.30w} is bounded by an absolute number depending only on $\kappa$.
On the other hand, It is easy to see that $\tilde \omega_2(t) \lesssim t^\beta$, where $\beta=-\ln 2 /\ln \kappa>0$.
Therefore, we have proved  \eqref{eq15.37w} and thus the lemma.
\end{proof}

Therefore, instead of \eqref{14.18m}, we have by \eqref{eq15.51w} that
\begin{multline*}
\Abs{\int_{(B(\bar y,2R)\setminus B(\bar y,R))\cap B} Du \cdot \vec g\,}\\
\lesssim  \norm{\vec b}_{L^1(B(\bar y, r))}\left(r^\beta R^{-\beta} +\{\ln (4/r)\}^{-1}\right) R^{-\frac{d}2}  \norm{\vec g}_{L^2((B(\bar y,2R)\setminus B(\bar y,R))\cap B)}.
\end{multline*}
and thus, similar to \eqref{eq4.37}, we get
\[
\norm{Du}_{L^1((B(\bar y,2R)\setminus B(\bar y,R))\cap B)} \lesssim \left(r^\beta R^{-\beta} +\{\ln (4/r)\}^{-1}\right)\,\norm{\vec b}_{L^1(B(\bar y, r))}.
\]
Recall that $N$ is the smallest positive integer such that $B \subset B(\bar y, 2^N cr)$. Therefore, we have $N \sim \ln(1/r)$, and similar to \eqref{14.06m}, we obtain
\[
\int_{B \setminus B(\bar y, cr)} \abs{D u} \lesssim \sum_{k=1}^N \left(2^{-\beta k} +\{\ln (4/r)\}^{-1}\right)\ \norm{\vec b}_{L^1(B(\bar y, r))} \sim \int_{B(\bar y, r)} \abs{\vec b},
\]
and thus we are done.
\end{proof}

\begin{theorem}				\label{thm-weak11-nd}
Let $B=B(0,1)$.
Suppose the coefficients $A(x)$ of the non-divergent operator $\cL$ satisfies the condition \eqref{eq15.08m}.
For $f \in L^2(B)$ supported on $B(0,\frac13)$, let $u \in W^{2,2}_0(B)$ be a unique solution to
\[
\cL u= f\;\mbox{ in }\; B;\quad
u=0 \;\mbox{ on } \; \partial B.
\]
Then for any $\alpha>0$, we have
\[
\Abs{\set{x \in B : \abs{D^2u(x)} > \alpha}}  \le \frac{C}{\alpha} \int_{B} \abs{f},
\]
where $C=C(d, \lambda, \Lambda, \omega_A)$.
\end{theorem}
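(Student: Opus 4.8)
The plan is to modify the proof of Lemma~\ref{lem-weak11-nd} in exactly the way the proof of Theorem~\ref{thm-weak11-d} modifies that of Lemma~\ref{lem02-weak11}. As in Theorem~\ref{thm-weak11-d}, normalize $\norm{f}_{L^1(B)}=1$; since $\supp f\subset B(0,\tfrac13)$, the Calder\'on--Zygmund decomposition argument of Stein~\cite{Stein93} reduces the problem, for $\alpha$ large, to checking the hypothesis of Lemma~\ref{lem01-stein} for the bounded linear operator $T\colon f\mapsto D^2u$ with centers $\bar y\in B(0,\tfrac12)$ and radii $0<r<\tfrac14$ (for bounded $\alpha$ the estimate is trivial since $\abs{\set{x\in B:\abs{D^2u(x)}>\alpha}}\le\abs{B}$). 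We take $c=8$ and, given $b\in L^2(B)$ supported in $B(\bar y,r)\cap B$ with $\int_B b=0$, let $u\in W^{2,2}_0(B)$ solve $\cL u=b$ in $B$, $u=0$ on $\partial B$.

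For $R\ge cr$ with $B\setminus B(\bar y,R)\neq\emptyset$ and $\mathbf g\in C^\infty_c((B(\bar y,2R)\setminus B(\bar y,R))\cap B)$, let $v\in L^2(B)$ be the adjoint solution of $\cL^* v=\dv^2\mathbf g$ in $B$ with $v=\frac{\mathbf g\nu\cdot\nu}{A\nu\cdot\nu}$ on $\partial B$ --- this replaces the constant-coefficient $v$ used in Lemma~\ref{lem-weak11-nd} --- which exists and satisfies $\norm{v}_{L^2(B)}\lesssim\norm{\mathbf g}_{L^2(B)}$ by \cite[Lemma~2]{EM2016}. By Definition~\ref{def-adj} and $\int_B b=0$,
\[
\int_{(B(\bar y,2R)\setminus B(\bar y,R))\cap B}\tr(D^2u\,\mathbf g)=\int_B vb=\int_{B(\bar y,r)\cap B}b\,\bigl(v-\overline{v}_{B(\bar y,r)\cap B}\bigr).
\]
Since $\mathbf g$ vanishes on $B(\bar y,R)\cap B$, $v$ is an adjoint solution of $\cL^* v=0$ in $B(\bar y,R)\cap B$ with $v=0$ on the (smooth) portion $\partial B\cap B(\bar y,R)$ of $\partial B$. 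The key step is to run the Campanato iteration of Theorem~\ref{thm-main-adj}, in the quantitative form \eqref{eq14.00adj}, rescaled to $B(\bar y,R)$ and supplemented by the corresponding boundary estimate, to get, for $x,y\in B(\bar y,\tfrac18 R)\cap B$,
\[
\abs{v(x)-v(y)}\lesssim\Bigl(\bigl(\tfrac{\abs{x-y}}{R}\bigr)^\beta+\int_0^{\abs{x-y}}\frac{\tilde\omega_A(t)}{t}\,dt\Bigr)R^{-\frac d2}\norm{v}_{L^2(B(\bar y,\frac14 R)\cap B)},
\]
with $\tilde\omega_A$ as in \eqref{eq10.54}. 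Since $\omega_A$ satisfies \eqref{eq15.08m}, hence the hypothesis of Lemma~\ref{lem04-omegatilde} up to a fixed constant, the integral is $\lesssim\{\ln(4/\abs{x-y})\}^{-1}$; combining this with $r\le R/8$ and $\norm{v}_{L^2(B(\bar y,\frac14 R)\cap B)}\le\norm{v}_{L^2(B)}\lesssim\norm{\mathbf g}_{L^2((B(\bar y,2R)\setminus B(\bar y,R))\cap B)}$, we obtain
\[
\bignorm{v-\overline{v}_{B(\bar y,r)\cap B}}_{L^\infty(B(\bar y,r)\cap B)}\lesssim\bigl(r^\beta R^{-\beta}+\{\ln(4/r)\}^{-1}\bigr)R^{-\frac d2}\norm{\mathbf g}_{L^2((B(\bar y,2R)\setminus B(\bar y,R))\cap B)}.
\]

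Substituting this into the displayed identity and taking the supremum over such $\mathbf g$ gives, by duality, $\norm{D^2u}_{L^2((B(\bar y,2R)\setminus B(\bar y,R))\cap B)}\lesssim(r^\beta R^{-\beta}+\{\ln(4/r)\}^{-1})\,R^{-d/2}\norm{b}_{L^1(B(\bar y,r)\cap B)}$, whence by H\"older over the annulus (of volume $\sim R^d$),
\[
\norm{D^2u}_{L^1((B(\bar y,2R)\setminus B(\bar y,R))\cap B)}\lesssim\bigl(r^\beta R^{-\beta}+\{\ln(4/r)\}^{-1}\bigr)\norm{b}_{L^1(B(\bar y,r)\cap B)}.
\]
Taking $R=cr,2cr,\dots,2^{N-1}cr$ with $N$ the least integer such that $B\subset B(\bar y,2^Ncr)$, so that $N\sim\ln(1/r)$, and summing as in \eqref{14.06m} gives
\[
\int_{B\setminus B(\bar y,cr)}\abs{D^2u}\lesssim\sum_{k=1}^N\bigl(2^{-\beta k}+\{\ln(4/r)\}^{-1}\bigr)\norm{b}_{L^1(B(\bar y,r)\cap B)}\sim\norm{b}_{L^1(B(\bar y,r)\cap B)},
\]
since $\sum_{k\ge 1}2^{-\beta k}<\infty$ and $N\{\ln(4/r)\}^{-1}\lesssim1$. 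Thus $T$ satisfies the hypothesis of Lemma~\ref{lem01-stein}, and the theorem follows.

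The step I expect to be the main obstacle is the \emph{boundary} modulus-of-continuity estimate for adjoint solutions of $\cL^*$ with Dini mean oscillation coefficients, used in the second paragraph. The proof of Theorem~\ref{thm-main-adj} is purely interior, so one must first extract its quantitative form \eqref{eq14.00adj} (essentially done in the excerpt) and then upgrade it to an estimate on $B(\bar y,R)\cap B$ for adjoint solutions that vanish on the smooth part of $\partial B$ met by $B(\bar y,R)$. This requires the boundary analogue of the Campanato decomposition --- ultimately a constant-coefficient boundary $W^{2,p}$ (or Schauder) estimate for the adjoint problem, which is available because $\cL_0^*$ has constant coefficients, cf.\ \cite{EM2016} --- together with the fact that, for $\bar y\in B(0,\tfrac12)$ and $r<\tfrac14$, the domains $B(\bar y,R)\cap B$ are uniformly $C^{1,1}$ with controlled geometry. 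Once this boundary estimate is in place, all the remaining steps are routine rescalings of the argument already carried out for Theorem~\ref{thm-weak11-d}.
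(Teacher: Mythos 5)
Your proposal follows the paper's proof essentially step by step: reduce via Stein's argument to $\bar y\in B(0,\tfrac12)$, $0<r<\tfrac14$, $c=8$; replace the constant-coefficient $\cL_0^*$ in Lemma~\ref{lem-weak11-nd} by $\cL^*$; replace the constant-coefficient bound \eqref{14.18m} by the quantitative modulus-of-continuity estimate for adjoint solutions of $\cL^*$ obtained in the proof of Theorem~\ref{thm-main-adj}; and sum the resulting annulus bounds $\bigl(r^\beta R^{-\beta}+\{\ln(4/r)\}^{-1}\bigr)\norm{b}_{L^1}$ using $N\sim\ln(1/r)$ and Lemma~\ref{lem04-omegatilde}. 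The computations and the final summation match the paper.

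The one place where you depart from the paper --- and where you flag the ``main obstacle'' --- is the worry about a boundary modulus-of-continuity estimate for adjoint solutions vanishing on $\partial B\cap B(\bar y,R)$. In the paper this issue does not arise, and your proof does not actually need to address it. The reduction via Stein (exploiting $\supp f\subset B(0,\tfrac13)$) gives $\bar y\in B(0,\tfrac12)$ and $0<r<\tfrac14$, and in the iteration every $R$ satisfies $B\setminus B(\bar y,R)\neq\emptyset$, hence $R<1+\abs{\bar y}<\tfrac32$. Therefore
\[
\abs{\bar y}+\tfrac14 R<\tfrac12+\tfrac38=\tfrac78<1,
\]
so $B(\bar y,\tfrac14 R)\subset B$ for every $R$ in play, and since $\mathbf g$ vanishes there, $\cL^*v=0$ in $B(\bar y,\tfrac14 R)$. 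The purely interior estimate \eqref{eq14.00adj}, rescaled and translated, then applies with $x,y\in B(\bar y,\tfrac18 R)$ and $\norm{v}_{L^2(B(\bar y,\frac14 R))}$ on the right, exactly as the paper states; no boundary analogue of the Campanato decomposition is needed. (Also, since $\mathbf g\in C^\infty_c\bigl((B(\bar y,2R)\setminus B(\bar y,R))\cap B\bigr)$ has compact support in $B$, the adjoint boundary condition $v=\mathbf g\nu\cdot\nu/(A\nu\cdot\nu)$ is simply $v=0$ on $\partial B$.) With that remark your proof is complete and coincides with the paper's.
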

\begin{proof}
As explained in the proof of Theorem~\ref{thm-weak11-d}, we only need to check the hypothesis of Lemma~\ref{lem01-stein} for $\bar y\in B(0,\frac12)$ and $0<r<\frac14$. We set $c=8$ and follow the same line of  proof of Lemma~\ref{lem-weak11-nd} up to \eqref{eq22.44f} with $\cL$ in place of $\cL_0$.
Since $\cL^* v =0$ in $B(\bar y, \frac14 R)\subset B$, similar to \eqref{eq14.00adj}, we have
\[
\abs{v(x)-v(y)}  \lesssim\left(\left(\frac{\abs{x-y}}{R}\right)^\beta + \int_0^{\abs{x-y}} \frac{\tilde\omega_A(t)}t \,dt \right) R^{-\frac{d}2} \norm{v}_{L^2(B(\bar y, \frac14 R))}
\]
for $x, y \in B(\bar y, \frac18 R)$.
Then, similar to \eqref{eq15.51w}, we get
\[
\norm{v -\overline{v}_{B(\bar y, r)}}_{L^\infty(B(\bar y, r))}
\lesssim  R^{-\frac{d}2} \left(r^\beta R^{-\beta} +\{\ln (4/r)\}^{-1}\right) \norm{\mathbf g}_{L^2(B)},
\]
where we used \cite[Lemma~2]{EM2016} to bound $\norm{v}_{L^2(B)}$ by $\norm{\mathbf g}_{L^2(B)}$.
Therefore, by \eqref{eq22.44f} and the above inequality, we get
\begin{multline*}
\Abs{\int_{(B(\bar y,2R)\setminus B(\bar y,R))\cap B}  \tr (D^2 u \, \mathbf{g}) }\\
\lesssim R^{-\frac{d}2} \left(r^\beta R^{-\beta} +\{\ln (4/r)\}^{-1}\right)\norm{b}_{L^1(B(\bar y, r))}\norm{\mathbf g}_{L^2((B(\bar y,2R)\setminus B(\bar y,R))\cap B)},
\end{multline*}
and thus, similar to \eqref{14.06m}, we get
\[
\norm{D^2u}_{L^1((B(\bar y,2R)\setminus B(\bar y,R))\cap B)} \lesssim \left(r^\beta R^{-\beta} +\{\ln (2/r)\}^{-1}\right)\,\norm{b}_{L^1(B(\bar y, r))}.
\]
The rest of proof is identical to that of Theorem~\ref{thm-weak11-d}.
\end{proof}

\begin{theorem}				\label{thm-weak11-adj}
Let $B=B(0,1)$.
Suppose the coefficients $A(x)$ of the non-divergent operator $\cL$ satisfies the condition \eqref{eq15.08m}.
For $\mathbf{f}=(f^{kl})_{k,l=1}^d \in L^2(B)$ supported on $B(0,\frac13)$, let $u \in L^2(B)$ be a unique solution to the adjoint problem
\[
\cL^* u= \dv^2\mathbf{f}\;\mbox{ in }\; B;\quad
u=0 \;\mbox{ on } \; \partial B.
\]
Then for any $\alpha>0$, we have
\[
\Abs{\set{x \in B : \abs{u(x)} > \alpha}}  \le \frac{C}{\alpha} \int_{B} \abs{\mathbf f},
\]
where $C=C(d, \lambda, \Lambda, \omega_A)$.
\end{theorem}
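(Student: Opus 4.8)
The plan is to mimic the proof of Theorem~\ref{thm-weak11-nd}, i.e., to run the argument of Lemma~\ref{lem-weak11-adj} with $\cL$ in place of $\cL_0$ and to replace the constant-coefficient Calder\'on--Zygmund step by the Campanato-type estimate proved for variable coefficients. The only structural difference from the divergence and non-divergence cases lies in which regularity estimate gets used for the test function: here the test function solves a non-divergence equation $\cL v=g$, so the relevant decay is the one for $D^2v$ recorded in \eqref{eq14.00nd}. First I note that $T:\mathbf f\mapsto u$ is bounded on $L^2(B)$ (by duality from the $W^{2,2}$ a priori estimate for the Dirichlet problem of $\cL$, available since \eqref{eq15.08m} makes $A$ a VMO function), so, as in the proof of Theorem~\ref{thm-weak11-d}, the Calder\'on--Zygmund decomposition of Stein reduces the claim, for $\alpha$ large, to verifying the hypothesis of Lemma~\ref{lem01-stein} with $c=8$ for $\bar y\in B(0,\tfrac12)$ and $0<r<\tfrac14$. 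Accordingly, let $\mathbf b=(b^{kl})$ be a matrix of $L^2(B)$ functions supported in $B(\bar y,r)\cap B\subset B(0,\tfrac34)$ with mean zero, and let $u\in L^2(B)$ be the adjoint solution of $\cL^* u=\dv^2\mathbf b$ in $B$ with $u=0$ on $\partial B$ (the boundary data in \eqref{16.25f} being trivial since $\mathbf b$ vanishes near $\partial B$).

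Given $R\ge cr$ with $B\setminus B(\bar y,R)\neq\emptyset$ and $g\in C^\infty_c((B(\bar y,2R)\setminus B(\bar y,R))\cap B)$, let $v\in W^{2,2}_0(B)$ solve $\cL v=g$ in $B$. By Definition~\ref{def-adj} and $\int\mathbf b=0$,
\[
\int_B u\,g=\int_B\tr(\mathbf b\,D^2v)=\int_{B(\bar y,r)\cap B}\tr\bigl(\mathbf b\,(D^2v-\mathbf q)\bigr),\qquad \mathbf q:=\overline{D^2v}_{B(\bar y,r)\cap B}.
\]
The choice $c=8$ together with $\bar y\in B(0,\tfrac12)$ and $r<\tfrac14$ guarantees $B(\bar y,\tfrac14 R)\subset B$ for every $R$ occurring below; since $g=0$ in $B(\bar y,R)$ we have $\cL v=0$ in $B(\bar y,\tfrac14 R)$, so the estimate \eqref{eq14.00nd}, rescaled to that ball, yields for $x,y\in B(\bar y,\tfrac18 R)$
\[
\abs{D^2v(x)-D^2v(y)}\lesssim\left(\Bigl(\tfrac{\abs{x-y}}{R}\Bigr)^\beta+\int_0^{\abs{x-y}}\frac{\tilde\omega_A(t)}{t}\,dt\right)R^{-\frac d2}\norm{D^2v}_{L^2(B)}.
\]
Using $r\le R/8$, Lemma~\ref{lem04-omegatilde}, and the Calder\'on--Zygmund bound $\norm{D^2v}_{L^2(B)}\lesssim\norm{g}_{L^2(B)}$, this gives, exactly as in \eqref{eq15.51w},
\[
\norm{D^2v-\mathbf q}_{L^\infty(B(\bar y,r)\cap B)}\lesssim R^{-\frac d2}\bigl(r^\beta R^{-\beta}+\{\ln(4/r)\}^{-1}\bigr)\,\norm{g}_{L^2((B(\bar y,2R)\setminus B(\bar y,R))\cap B)}.
\]

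Substituting into the identity above, estimating by $\norm{\mathbf b}_{L^1}$, taking the supremum over admissible $g$ (duality), and then applying H\"older on the annulus of measure $\lesssim R^d$, produces the analogue of \eqref{eq4.37}:
\[
\norm{u}_{L^1((B(\bar y,2R)\setminus B(\bar y,R))\cap B)}\lesssim\bigl(r^\beta R^{-\beta}+\{\ln(4/r)\}^{-1}\bigr)\,\norm{\mathbf b}_{L^1(B(\bar y,r)\cap B)}.
\]
Summing over $R=cr,2cr,\ldots,2^{N-1}cr$, where $N$ is the least integer with $B\subset B(\bar y,2^{N}cr)$ so that $N\sim\ln(1/r)$, the geometric contribution $\sum_k 2^{-\beta k}$ is bounded while $\sum_{k=1}^N\{\ln(4/r)\}^{-1}\sim N/\ln(1/r)\sim1$; hence
\[
\int_{B\setminus B(\bar y,cr)}\abs{u}\lesssim\norm{\mathbf b}_{L^1(B(\bar y,r)\cap B)}=\int_{B(\bar y,r)\cap B}\abs{\mathbf b},
\]
so $T$ satisfies the hypothesis of Lemma~\ref{lem01-stein} and the theorem follows. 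I expect the only genuinely delicate point to be the same one as in Theorems~\ref{thm-weak11-d} and~\ref{thm-weak11-nd}: the term $\int_0^r\tilde\omega_A(t)/t\,dt$ must decay at the rate $\{\ln(4/r)\}^{-1}$ furnished by Lemma~\ref{lem04-omegatilde} under hypothesis \eqref{eq15.08m}, this being exactly the rate that is not destroyed when multiplied by the number $N\sim\ln(1/r)$ of dyadic annuli summed at the last step; any slower decay would break the telescoping, which is why \eqref{eq15.08m} is the natural assumption for this scheme.
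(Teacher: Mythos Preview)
Your proposal is correct and follows essentially the same route as the paper: reduce via Lemma~\ref{lem01-stein} with $c=8$ to the mean-zero case, use the duality identity \eqref{eq0911} with $\cL v=g$, apply the rescaled estimate \eqref{eq14.00nd} for $D^2v$ on $B(\bar y,\tfrac18 R)$ together with Lemma~\ref{lem04-omegatilde}, and sum over dyadic annuli exactly as in Theorem~\ref{thm-weak11-d}. In fact you correctly write $\cL v=0$ in $B(\bar y,\tfrac14 R)$, whereas the paper has a minor typo ($\cL^* v=0$) at that point.
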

\begin{proof}
As explained in the proof of Theorem~\ref{thm-weak11-d}, we only need to check the hypothesis of Lemma~\ref{lem01-stein} for $\bar y\in B(0,\frac12)$ and $0<r<\frac14$. We set $c=8$ and follow the same line of proof of Lemma~\ref{lem-weak11-adj} up to \eqref{eq0911} with $\cL$ and $\cL^*$ in place of $\cL_0$ and $\cL_0^*$.
Since $\cL^* v =0$ in $B(\bar y, \frac14 R)\subset B$, similar to \eqref{eq14.00nd}, we have
\[
\abs{D^2v(x)-D^2v(y)}  \lesssim\left(\left(\frac{\abs{x-y}}{R}\right)^\beta + \int_0^{\abs{x-y}} \frac{\tilde\omega_A(t)}t \,dt \right) R^{-\frac{d}2} \norm{D^2v}_{L^2(B(\bar y,\frac14 R))}
\]
for $x, y \in B(\bar y, \frac18 R)$.
Then, similar to \eqref{eq15.51w}, we obtain
\[
\norm{D^2v -\overline{D^2v}_{B(\bar y, r)}}_{L^\infty(B(\bar y, r))}
\lesssim  R^{-\frac{d}2} \left(r^\beta R^{-\beta} +\{\ln (4/r)\}^{-1}\right) \norm{g}_{L^2(B)},
\]
where we used the estimate $\norm{D^2 v}_{L^2(B)} \lesssim \norm{g}_{L^2(B)}$.
Therefore, by \eqref{eq0911} and the above inequality, we get
\[
\Abs{\int_{(B(\bar y,2R)\setminus B(\bar y,R))\cap B} u\, g\,}
\lesssim   R^{-\frac{d}2} \left(r^\beta R^{-\beta} +\{\ln (4/r)\}^{-1}\right) \norm{b}_{L^1(B(\bar y, r))}
\norm{g}_{L^2((B(\bar y,2R)\setminus B(\bar y,R))\cap B)}
\]
and thus, similar to \eqref{14.06m}, we get
\[
\norm{u}_{L^1((B(\bar y,2R)\setminus B(\bar y,R))\cap B)} \lesssim \left(r^\beta R^{-\beta} +\{\ln (2/r)\}^{-1}\right)\,\norm{b}_{L^1(B(\bar y, r))}.
\]
The rest of proof is identical to that of Theorem~\ref{thm-weak11-d}.
\end{proof}

\section{An application}				\label{sec4}
In \cite{Br08,An09}, Brezis answered a question raised by Serrin \cite{Se64} by proving that any $W^{1,1}$-weak solution to divergence form equations is in $W^{1,p}$ for any $p\in (1,\infty)$ provided that the coefficients are Dini continuous.
Recently, Escauriaza and Montaner \cite{EM2016} obtained a similar result for non-divergence form equations under the same Dini condition.
The proofs in these two papers are based on a duality argument and use the boundedness of the gradient of solutions (or solutions themselves) to the adjoint equations.
See also \cite{JMV09} and \cite{EM2016} for the corresponding counterexamples.

It follows from Theorems \ref{thm-main-d} and \ref{thm-main-adj} that for the boundedness of the gradient of solutions (or solutions themselves) to the adjoint equations, the Dini continuity condition can be replaced by the Dini mean oscillation condition \eqref{eq:dcmo}.
As a consequence, we deduce the following corollaries, which improve the aforementioned results in \cite{Br08, An09, EM2016}.

\begin{corollary}
                    \label{cor1}
Suppose the coefficients $A(x)$ of the divergent operator $L$ have Dini mean oscillation; i.e., $A$ satisfies \eqref{eq:dcmo}.
Let $u \in W^{1,1}(B_4)$ be a weak solution of
\[
Lu= \dv (A(x) \nabla u)=\dv \vec g\;\mbox{ in } \; B_4=B(0,4),
\]
where $\vec g\in L^p(B_4)$ for some $p\in (1,\infty)$.
Then, we have $u\in W^{1,p}(B_1)$ and
\[
\norm{u}_{W^{1,p}(B_1)}\le C \norm{u}_{W^{1,1}(B_4)}+C \norm{\vec g}_{L^p(B_4)},
\]
where $C$ depends only on $d$, $p$, $\lambda$, $\Lambda$, the uniform
modulus of continuity of the coefficients.
\end{corollary}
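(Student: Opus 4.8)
The plan is to follow the duality argument of Brezis~\cite{Br08} and Escauriaza--Montaner~\cite{EM2016}, but to replace their Dini continuity input by Theorem~\ref{thm-main-d} applied to the formal adjoint
\[
L^{*}v=\dv(A^{\top}\nabla v),
\]
whose coefficient matrix $A^{\top}$ again satisfies the ellipticity and boundedness conditions and has $\omega_{A^{\top}}=\omega_{A}$, hence Dini mean oscillation. It suffices to show that, for every $\vec\Phi\in C^{\infty}_{c}(B_{2})^{d}$,
\[
\Abs{\int_{B_{2}}\vec\Phi\cdot\nabla u}\lesssim\bigl(\norm{u}_{W^{1,1}(B_{4})}+\norm{\vec g}_{L^{p}(B_{4})}\bigr)\norm{\vec\Phi}_{L^{p'}(B_{2})},
\]
for then duality yields $\nabla u\in L^{p}(B_{2})$ with the corresponding bound, and $u\in L^{p}(B_{1})$, hence $u\in W^{1,p}(B_{1})$, follows from $u\in W^{1,1}\subset L^{1}$ by finitely many applications of the Sobolev--Poincar\'e embeddings $W^{1,1}\hookrightarrow L^{d/(d-1)}$ and $W^{1,q}\hookrightarrow L^{dq/(d-q)}$.

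Given $\vec\Phi\in C^{\infty}_{c}(B_{2})^{d}$, I would let $\psi\in\bigcap_{1<q<\infty}W^{1,q}_{0}(B_{3})$ be the unique solution of $L^{*}\psi=\dv\vec\Phi$ in $B_{3}$, which exists by the $W^{1,q}$-solvability for divergence form operators with VMO (in particular Dini mean oscillation) coefficients; see \cite[Sec.~11.1]{DK11}. One has the uniform estimate $\norm{\nabla\psi}_{L^{p'}(B_{3})}\lesssim\norm{\vec\Phi}_{L^{p'}(B_{2})}$, and hence $\norm{\psi}_{L^{1}(B_{3})}\lesssim\norm{\vec\Phi}_{L^{p'}(B_{2})}$ by Poincar\'e and H\"older on the finite-measure ball $B_{3}$. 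Since $\supp\vec\Phi\subset B_{2}$, we have $L^{*}\psi=0$ in $B_{3}\setminus\overline{B_{2}}$, so the interior $C^{1}$ estimate of Theorem~\ref{thm-main-d} (the case $\vec g=0$, rescaled to balls centred in this region) gives, on a slightly narrower annular collar $\mathcal{A}\subset\subset B_{3}\setminus\overline{B_{2}}$,
\[
\norm{\psi}_{L^{\infty}(\mathcal{A})}+\norm{\nabla\psi}_{L^{\infty}(\mathcal{A})}\lesssim\norm{\psi}_{L^{1}(B_{3})}\lesssim\norm{\vec\Phi}_{L^{p'}(B_{2})},
\]
with constant depending on $d,\lambda,\Lambda$ and on $\omega_{A}$ through $\int_{0}^{1}\omega_{A}(t)/t\,dt$. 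Separately, since $\vec\Phi$ is smooth and compactly supported, Theorem~\ref{thm-main-d} gives $\psi\in C^{1}_{\loc}(B_{3})$, so $\nabla\psi\in L^{\infty}(\overline{B_{11/4}})$ with a finite (though possibly $\vec\Phi$-dependent) bound; this is used only to legitimize taking $u$ as a test function below.

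Next I would carry out the pairing. Pick $\zeta\in C^{\infty}_{c}(B_{11/4})$ with $\zeta\equiv1$ on $B_{9/4}$, so that $\supp\nabla\zeta\subset\mathcal{A}$ and $\nabla\zeta\equiv0$ on $\supp\vec\Phi$. Using $\zeta\psi\in C^{1}_{c}(B_{4})$ as a test function in the weak formulation of $Lu=\dv\vec g$ (legitimate by mollification, since $A\nabla u\in L^{1}$ and $\nabla\psi\in L^{\infty}$ near $\supp\zeta$) and $\zeta u\in W^{1,1}_{c}(B_{3})$ as a test function in the weak formulation of $L^{*}\psi=\dv\vec\Phi$ (legitimate since $A^{\top}\nabla\psi\in L^{\infty}$ near $\supp\zeta$ and $\nabla u\in L^{1}$), expanding the products, cancelling the common term $\int\zeta\,(A\nabla u)\cdot\nabla\psi$, and using $\zeta\vec\Phi=\vec\Phi$ and $\vec\Phi\cdot\nabla\zeta=0$, one obtains the identity
\[
\int_{B_{2}}\vec\Phi\cdot\nabla u=\int\zeta\,\vec g\cdot\nabla\psi+\int\psi\,\vec g\cdot\nabla\zeta-\int\psi\,(A\nabla u)\cdot\nabla\zeta+\int u\,(A\nabla\zeta)\cdot\nabla\psi.
\]
The first integral is $\lesssim\norm{\vec g}_{L^{p}(B_{4})}\norm{\nabla\psi}_{L^{p'}(B_{3})}\lesssim\norm{\vec g}_{L^{p}(B_{4})}\norm{\vec\Phi}_{L^{p'}}$; the remaining three are supported in $\mathcal{A}$ and, using the uniform $L^{\infty}$ bounds on $\psi$ and $\nabla\psi$ over $\mathcal{A}$ together with $\norm{\vec g}_{L^{1}}\lesssim\norm{\vec g}_{L^{p}}$ and the $W^{1,1}(B_{4})$ norm of $u$, are bounded by $\lesssim(\norm{u}_{W^{1,1}(B_{4})}+\norm{\vec g}_{L^{p}(B_{4})})\norm{\vec\Phi}_{L^{p'}}$. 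This is the required estimate, and the corollary follows as indicated in the first paragraph.

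The step I expect to be the main obstacle --- and the place where the new hypothesis actually enters --- is the last commutator term $\int u\,(A\nabla\zeta)\cdot\nabla\psi$: since $u$ is only in $L^{1}$, bounding it forces a genuine $L^{\infty}$ bound on $\nabla\psi$ over the collar $\supp\nabla\zeta$ on which $\psi$ is adjoint-harmonic, and this is exactly what Theorem~\ref{thm-main-d} supplies under the Dini mean oscillation assumption on $A$ (equivalently $A^{\top}$). Under merely VMO coefficients such a bound fails, in agreement with the counterexamples of \cite{JMV09,EM2016}. The remaining ingredients --- solvability of the adjoint problem, Poincar\'e and H\"older on a bounded domain, the mollification arguments justifying the two test functions, and the final Sobolev bootstrap from $\nabla u\in L^{p}$ to $u\in W^{1,p}$ --- are routine.
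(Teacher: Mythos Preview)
Your argument is correct and follows exactly the route the paper indicates: the duality scheme of Brezis~\cite{Br08} and Escauriaza--Montaner~\cite{EM2016}, with Theorem~\ref{thm-main-d} supplying the $L^\infty$ gradient bound for the adjoint solution $\psi$ on the collar where it is $L^*$-harmonic. The paper does not spell out the details, but your cutoff identity and the estimation of each term are precisely what is needed, and your identification of the commutator term $\int u\,(A\nabla\zeta)\cdot\nabla\psi$ as the place where the Dini mean oscillation hypothesis genuinely enters is on the mark.
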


\begin{corollary}
                        \label{cor2}
Suppose the coefficients $A(x)$ of the non-divergent operator $\cL$ have Dini mean oscillation; i.e., $A$ satisfies  \eqref{eq:dcmo}.
Let $u \in W^{2,1}(B_4)$ be a strong solution of
\[
\cL u= \tr (A(x) D^2 u)= g\;\mbox{ in } \; B_4=B(0,4),
\]
where $g\in L^p(B_4)$ for some $p\in (1,\infty)$.
Then, we have $u\in W^{2,p}(B_1)$ and
\[
\norm{u}_{W^{2,p}(B_1)}\le C \norm{u}_{W^{2,1}(B_4)}+C \norm{g}_{L^p(B_4)},
\]
where $C$ depends only on $d$, $p$, $\lambda$, $\Lambda$, the uniform
modulus of continuity of the coefficients.
\end{corollary}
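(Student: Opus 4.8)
The plan is to carry out the duality argument of Brezis \cite{Br08,An09} and Escauriaza--Montaner \cite{EM2016}, using Theorem \ref{thm-main-adj} (and, for Corollary \ref{cor1}, Theorem \ref{thm-main-d}) to supply, under the Dini mean oscillation hypothesis, the boundedness of solutions of the adjoint equation which those authors established only under Dini continuity. I describe the argument for Corollary \ref{cor2}; Corollary \ref{cor1} is entirely parallel, with Theorem \ref{thm-main-d} replacing Theorem \ref{thm-main-adj} and the $W^{1,p}$ solvability of divergence form equations with VMO coefficients replacing the $L^{p}$ solvability of the adjoint problem.

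Fix $1<p<\infty$. By the duality characterization of the $L^{p}$ norm (and since $D^{2}u\in L^{1}$), it suffices to bound $\bigl|\int_{B_{1}}\tr(D^{2}u\,\mathbf h)\bigr|$ by $\norm{g}_{L^{p}(B_{4})}$ plus a lower order norm of $u$, for an arbitrary $\mathbf h\in C^{\infty}_{c}(B_{1};\mathrm{Sym}(d))$ with $\norm{\mathbf h}_{L^{p'}}\le 1$. Given such an $\mathbf h$, let $w$ be the unique adjoint solution of $\cL^{*}w=\dv^{2}\mathbf h$ in $B_{2}$ with $w=0$ on $\partial B_{2}$ (the boundary datum $\mathbf h\nu\cdot\nu/(A\nu\cdot\nu)$ vanishes since $\mathbf h$ is compactly supported in $B_{1}$); by \cite[Lemma~2]{EM2016}, which rests on the classical $W^{2,p}$ solvability of $\cL$ with VMO coefficients, $w\in L^{s}(B_{2})$ for every $s\in(1,\infty)$ and $\norm{w}_{L^{p'}(B_{2})}\lesssim\norm{\mathbf h}_{L^{p'}(B_{2})}$. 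Since $\mathbf h$ is smooth it has Dini mean oscillation, and $A$ does by hypothesis, so Theorem \ref{thm-main-adj} (in its interior form, after rescaling) gives $w\in C(\overline{B_{r}})$ for a fixed $r\in(1,2)$; in particular $w$ is bounded on the support of a fixed cutoff $\zeta\in C^{\infty}_{c}(B_{r})$ with $\zeta\equiv 1$ on $B_{1}$. Note that this boundedness enters only qualitatively; the quantitative control of $w$ is the $L^{p'}$ bound just stated.

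Now one pairs the two equations. Approximating $u$ in $W^{2,1}$ by smooth functions and passing to the limit --- legitimate precisely because $w$ and $\mathbf h$ are bounded on $\mathrm{supp}\,\zeta$ --- one may use $\zeta u$ as a test function in the defining identity for $w$; recalling $\zeta\equiv 1$ on $\mathrm{supp}\,\mathbf h$, the symmetry of $A$, and the strong equation $\cL u=g$, this gives
\[
\int_{B_{1}}\tr(D^{2}u\,\mathbf h)=\int_{B_{2}}w\,\cL(\zeta u)=\int_{B_{2}}w\,\zeta g+\int_{\cA}w\bigl(2\,A\nabla\zeta\cdot\nabla u+u\,\tr(A\,D^{2}\zeta)\bigr),
\]
where $\cA\subset B_{r}$ is an annular neighborhood of $\partial B_{1}$ containing $\mathrm{supp}\,\nabla\zeta$ and $\mathrm{supp}\,D^{2}\zeta$. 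The first term is $\le\norm{w}_{L^{p'}(B_{2})}\norm{g}_{L^{p}(B_{4})}\lesssim\norm{g}_{L^{p}(B_{4})}$. For the second, if $1<p\le d/(d-1)$ the embedding $W^{2,1}\hookrightarrow W^{1,d/(d-1)}$ yields $\nabla u,\,u\in L^{p}(\cA)$ with norms $\lesssim\norm{u}_{W^{2,1}(B_{4})}$, so that term is $\lesssim\norm{w}_{L^{p'}(B_{2})}\norm{u}_{W^{2,1}(B_{4})}\lesssim\norm{u}_{W^{2,1}(B_{4})}$. Taking the supremum over $\mathbf h$ and adding the lower order norms of $u$ (controlled by the same embedding) gives $\norm{u}_{W^{2,p}(B_{1})}\lesssim\norm{g}_{L^{p}(B_{4})}+\norm{u}_{W^{2,1}(B_{4})}$ in this range of $p$.

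For general $p<\infty$ one bootstraps: once $u\in W^{2,q}(B_{\rho})$ is known with the corresponding estimate, the same argument on $B_{\rho}$ (after rescaling) handles every exponent up to the Sobolev exponent $q^{*}=dq/(d-q)$ of $\nabla u$ --- or every exponent once $q\ge d$ --- and since each step improves the exponent by a fixed factor while shrinking the ball by only a fixed amount, finitely many iterations reach $p$ with $B_{1}$ still inside the final domain, using the room available in $B_{4}$. I expect the main obstacle to be the low regularity $u\in W^{2,1}$: the use of $\zeta u$ as a test function for $w$ must be justified carefully by mollification (this is exactly where the boundedness of $w$ from Theorem \ref{thm-main-adj} is needed), and the dependence of all constants on $\omega_{A}$ and its Dini integral $\int_{0}^{1}\omega_{A}(t)\,t^{-1}\,dt$ must be tracked through the iteration. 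The hypotheses on $A$ enter only through $\norm{w}_{L^{p'}}\lesssim\norm{\mathbf h}_{L^{p'}}$ and the qualitative boundedness of $w$, and it is the latter --- now available under Dini mean oscillation rather than only Dini continuity --- that gives the improvement over \cite{Br08,An09,EM2016}.
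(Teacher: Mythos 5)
Your proposal is correct and is exactly the duality argument the paper has in mind: Section 4 simply cites the Brezis and Escauriaza--Montaner duality proofs and observes that the needed boundedness of adjoint solutions, previously available only under Dini continuity, now follows from Theorem \ref{thm-main-adj} under Dini mean oscillation. You have merely (and correctly) written out the details the paper leaves to the references, including the mollification step that uses the $L^\infty$ bound on $w$ and the finite bootstrap in $p$.
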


We note that regarding the dependence of $C$ in Corollaries \ref{cor1} and \ref{cor2}, the uniform modulus of continuity of the coefficients can be replaced by the modulus of continuity of the coefficients in the $L^1$-mean sense.

\section*{Acknowledgement}
The authors would like to thank the referees for reading of the manuscript and many useful comments.


\end{document}